\newtheorem{Prop}{Proposition}[section]
\newtheorem{Def}[Prop]{Definition}
\newtheorem{Theorem}[Prop]{Theorem}
\newtheorem{Lemma}[Prop]{Lemma}
\newtheoremstyle{Example}{\topsep}{\topsep}%
  {}
  {}
  {\bfseries}
  {.}
  { }
  {\thmname{#1}\thmnumber{ #2}\thmnote{ #3}}
\theoremstyle{Example}
\newtheorem{Example}[Prop]{Example}
\newtheoremstyle{Remark}{\topsep}{\topsep}%
  {}
  {}
  {\bfseries}
  {.}
  { }
  {\thmname{#1}\thmnumber{ #2}\thmnote{ #3}}
\theoremstyle{Remark}
\newtheorem{Remark}[Prop]{Remark}
\title[Deformations of polarizations of powers of the maximal ideal]{Deformations of polarizations of powers of the maximal ideal}
\author{Henning Lohne}
\address{Matematisk Institutt\\
         Johs. Brunsgt. 12\\
         5008 Bergen}
\email{henning.lohne@math.uib.no}
\subjclass[2010]{Primary: 13D10, 13D02; Secondary: 05E40}
\date{\today}
\begin{document}

\begin{abstract}
In this paper, we study the two natural polarizations, namely the standard polarization and the box polarization, of the $d$-th power of the maximal ideal in a polynomial ring. We show that these polarizations correspond to smooth points in the Hilbert scheme, and we calculate the dimension of their component which shows that they lie on different components. When $d=2$, we show that all maximal polarizations are smooth points, and we give a simple method for calculating the dimension of their component. 
\end{abstract}

\bibliographystyle{plain}

\maketitle

\section{Introduction}

Let $k$ be a field, and let $S=k[x_1,\dots,x_n]$ be the polynomial ring in $n$ variables. We are interested in different polarizations of the ideal $m^d$. In \cite{NR2009Betti}, Uwe Nagel and Victor Reiner defined the complex of boxes resolution, which gives a natural minimal resolution of a certain polarization of the ideal $m^d$. Yaganawa showed in \cite{yanagawa2012alternative} that this natural polarization, which we will call the box polarization of $m^d$, can be defined for any Borel ideal. In this paper, we study the Hilbert scheme of this, and other polarizations of $m^d$. We show that both the standard polarization and the box polarization give smooth points on the Hilbert scheme. In the special case $d=2$, we have that all maximal polarizations of $m^2$ are actually deformations of another related ideal $m_\mathrm{sq.fr.}^2$. Maximal polarizations of such ideals are classified by spanning trees of the complete graph. The deformations of such a polarization can be calculated. We show that all such polarizations are smooth points on the Hilbert scheme, and we calculate the dimension of the component it lies on. 

\medskip
The paper is organized as follows: Section \ref{polarizationsanddeformations} contains the definitions of what we mean by a polarization, and some basic fact about deformations. We also show how it is possible to calculate the dimension of the tangent space of the box polarization and the standard polarization in the Hilbert scheme of a power of the maximal ideal. 

In Section \ref{theboxpolarization}, we calculate the dimension of the tangent space of the box polarization in the Hilbert scheme. We also show that this ideal lies on a component where the dimension is known. Since these two dimensions are equal, it follows that the box polarization is a smooth point on the Hilbert scheme. 

In Section \ref{standardpolarization}, we find all first order deformations of the standard polarization. If $n\ge 4$, then these are only deformations of the variables. In this case, the standard polarization is a smooth point in the Hilbert scheme. When $n=2$, then the standard polarization and the box polarization are the same, so it is also a smooth point. Finally, for $n=3$, we show that there in fact are three first order deformations which are not deformations of the variables. In this case, we explicitely shows that they can be liftet to global deformations, which shows that the standard polarization is also a smooth point in the Hilber scheme when $n=3$.

In Section \ref{kvadratfriversjon}, we study the square-free ideal $m_\mathrm{sq.fr}^d$, and maximal polarizations of this ideal. Such polarizations corresponds to spanning trees of the complete graph. By using this correspondence we manage to calculate all the deformations of such polarizations, and we show that they are all smooth points in the Hilbert scheme. We also give an easy algorithm for calculating the dimension of the component they lie on, by using the corresponding spanning tree.

\medskip
\noindent
\textbf{Acknowledgments.} I would like to thank Professor Gunnar Fl\o ystad for giving me valuable comments and suggestions regarding the work of this paper. 

\section{Polarizations and deformations}\label{polarizationsanddeformations}
\begin{Def}\label{polarization}
Let $I$ be an ideal in $S=k[x_1,\dots,x_n]$. A polarization $\widetilde{I}$ of $I$ is an ideal in the polynomial ring
$$\widetilde{S}:=k\left[x_{11},\dots,x_{1r_1},x_{21},\dots,x_{2r_2},\dots,x_{nr_n}\right]$$
such that the sequence
$$\sigma = \left(x_{11}-x_{12}, x_{11}-x_{13},\dots, x_{11}-x_{1r_1},x_{21}-x_{22},\dots, x_{n1}-x_{nr_n}\right)$$
is a regular $\widetilde{S}/\widetilde{I}$-sequence, and that $\widetilde{I}\otimes \widetilde{S}/ \langle \sigma\rangle \cong I$. The homomorphism $\varphi: \widetilde{I} \longrightarrow I$ is called the depolarization of $\widetilde{I}$. 
\end{Def}

\begin{Def}
Let $m^d$ be the $d$-th power of the maximal ideal in the polynomial ring $S=k[x_1,\dots,x_n]$, and let $P_d$ be a polarization of $m^d$, and let $\varphi: P_d\rightarrow m^d$ be the depolarization homomorphism. The preimages $\varphi^{-1}(x_i^d)$ are called the vertices of $P_d$.
\end{Def}

We want to study and compare different polarizations of $m^d$. We therefore want them to be ideals in the same polynomial ring. Every polarization of $m^d$ can be identified by an ideal $P_d$ in the polynomial ring
$$\widetilde{S}=k[x_{11},\cdots, x_{1d}, \cdots, x_{n1}\cdots x_{nd}],$$
and the Betti numbers and Hilbert polynomial of the rings $\widetilde{S}/P_d$ are all the same. We define the trivial deformation to be $M_d:=(x_{11},x_{21},\dots,x_{n1})^d$. In this paper we want to study the Hilbert scheme that classifies all closed subschemes $Y\subseteq \mathrm{Proj}(\widetilde{S})$ having Hilbert polynomial equal to the Hilbert polynomial of $\widetilde{S}/M_d$. So every polarization of $m^d$ corresponds to a point in this Hilbert scheme. We will show that two natural polarizations of $m^d$, namely the standard polarization and the box polarization of $m^d$ are both smooth points on this Hilbert scheme.

We recall some notion from deformation theory. Let $D=k[t]/t^2$ denote the dual numbers, and let $S'=S[t]/t^2$. A first order deformation of $I\subset S$ is an ideal $I'\subset S'$ that extends $I$ over the dual numbers. In other words, it is an ideal $I'\subset S'$ such that $S'/I'$ is flat over $D$ and such that $k\otimes_D (S'/I') \cong S/I$.

The set of first order deformations can be given a module structure by defining $T^0:=\mathrm{Hom}_S(I,S/I)$. This correspondence and more details can be found in Section 1.2, or more precisely Proposition 2.3 in \cite{hartshorne2009deformation}.

An ideal $I'\subset S'$ is a first order deformation of $I$ if and only if every relation of $I$ lifts to a relation of $I'$. More precisely, we have a well know fact presented in the following lemma:

\begin{Lemma}\label{liftrelations} Let $I$ be an ideal in $S$, and let $I'$ be an ideal of $S'$ such that $I'\otimes_{S'} S \cong I$. Assume that we have surjections $\varphi:S^p\rightarrow I$ and $\varphi':{S'}^p\rightarrow I'$. Then $I'$ is a first order deformation of $I$ if and only if every relation of $I$, i.e. an element of $\mathrm{ker}(\varphi)$, lifts to a relation of $I'$, i.e. an element of $\mathrm{ker}(\varphi')$.
\end{Lemma}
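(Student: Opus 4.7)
The plan is to reduce the statement to the standard flatness criterion for modules over the dual numbers $D=k[t]/(t^2)$. An ideal $I'\subset S'$ is a first order deformation of $I$ precisely when $S'/I'$ is flat over $D$. Using the free resolution $\cdots\to D\xrightarrow{\cdot t}D\xrightarrow{\cdot t}D\to k\to 0$, one finds $\mathrm{Tor}_1^D(k,S'/I')=\{m\in S'/I':tm=0\}/t(S'/I')$, so flatness is equivalent to the implication $tF\in I'\Rightarrow F\in I'+tS'$ for every $F\in S'$. Since $t^2=0$, the left-hand side only depends on $f:=F\bmod t\in S$, and the hypothesis $I'\otimes_{S'}S\cong I$ gives $I'/tI'\cong I$, so the right-hand side amounts to $f\in I$. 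Thus flatness of $S'/I'$ is equivalent to the condition
$$(\star)\qquad f\in S,\ tf\in I'\ \Longrightarrow\ f\in I.$$

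Before proving the two implications I would fix generators $g_i=\varphi(e_i)$ of $I$ and their chosen lifts $G_i=\varphi'(e_i)\in I'$, and record the identity $tG_i=tg_i$ in $S'$, which is immediate from $G_i-g_i\in tS'$ together with $t^2=0$. For the direction ``relations lift $\Rightarrow$ first order deformation'', take $f\in S$ with $tf\in I'$ and write $tf=\sum F_iG_i$. Reducing mod $t$ yields a relation $\sum f_ig_i=0$ with $f_i=F_i\bmod t$; by hypothesis it lifts to $\sum F'_iG_i=0$ with $F'_i\equiv f_i\pmod t$, so $F_i-F'_i=tE_i$ for some $E_i\in S$. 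Then $tf=\sum(F_i-F'_i)G_i=\sum tE_ig_i=t\sum E_ig_i$, and injectivity of multiplication by $t$ on the $S$-slice of $S'$ gives $f=\sum E_ig_i\in I$, establishing $(\star)$.

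Conversely, assuming $I'$ is a first order deformation, given a relation $\sum f_ig_i=0$ I would lift each $f_i$ arbitrarily to $F_i\in S'$; then $\sum F_iG_i\in I'\cap tS'$ reduces to $0$ mod $t$, so it equals $tf$ for a unique $f\in S$. Applying $(\star)$ yields $f=\sum h_ig_i\in I$, and setting $F'_i:=F_i-th_i$ gives a lift of $f_i$ satisfying $\sum F'_iG_i=tf-t\sum h_ig_i=0$, so the relation lifts.

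The only mildly delicate point in this argument is extracting the concrete criterion $(\star)$ from the flatness definition; once this is set up, the two implications are parallel rewritings that hinge on the identity $tG_i=tg_i$, so I do not anticipate any real obstacle.
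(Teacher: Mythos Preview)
Your argument is correct, and it takes a genuinely different route from the paper's proof. The paper proceeds diagrammatically: it first invokes \cite[Proposition 2.2]{hartshorne2009deformation} to identify flatness of $S'/I'$ with exactness of $0\to S/I\xrightarrow{t}S'/I'\to S/I\to 0$, then applies the $9$-lemma twice---once to pass to $0\to I\xrightarrow{t}I'\to I\to 0$, and once more (using the compatible presentations) to pass to $0\to\ker(\varphi)\xrightarrow{t}\ker(\varphi')\to\ker(\varphi)\to 0$---and finally observes that the left map in the last sequence is automatically injective, so exactness reduces to surjectivity on the right, i.e.\ lifting of relations. Your approach instead unwinds flatness over $D$ to the explicit criterion $(\star)$ via the periodic resolution of $k$, and then checks both implications by direct manipulation of generators and relations, exploiting the identity $tG_i=tg_i$.

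What each approach buys: the paper's $9$-lemma argument is cleaner to state and makes the logical structure transparent (three short exact sequences linked by two diagram chases), but it imports a black box from \cite{hartshorne2009deformation}. Your argument is more elementary and self-contained---no appeal to the $9$-lemma or to an external reference for the flatness criterion---and it produces the lifted relation $F'_i=F_i-th_i$ explicitly, which is exactly the kind of formula one uses in the later computations of the paper. One small remark: when you write ``the hypothesis $I'\otimes_{S'}S\cong I$ gives $I'/tI'\cong I$, so the right-hand side amounts to $f\in I$'', what you actually use is that the image of $I'$ in $S=S'/tS'$ equals $I$; this follows already from the compatibility of $\varphi$ and $\varphi'$ (which both proofs tacitly assume), so the point is harmless, but it is worth saying precisely.
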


\begin{proof}
By \cite[Proposition 2.2]{hartshorne2009deformation}, the fact that $I'$ is a first order deformation is equivalent to the sequence
$$0\longrightarrow S/I\overset{t}{\longrightarrow} S'/I' \longrightarrow S/I \rightarrow 0$$
being exact. By the $9$-lemma, this is equivalent to sequence
$$0\longrightarrow I\overset{t}{\longrightarrow} I' \longrightarrow I \rightarrow 0$$
being exact. By using the $9$-lemma again, we get
$$
\xymatrix{
 & 0 \ar[d] & 0 \ar[d] & 0 \ar[d] & \\
0 \ar[r] & \mathrm{ker}(\varphi) \ar[r]^t \ar[d] & \mathrm{ker}(\varphi') \ar[r] \ar[d] & \mathrm{ker}(\varphi) \ar[r] \ar[d] & 0 \\
0 \ar[r] & S^p \ar[r]^t \ar[d] & {S'}^p \ar[r] \ar[d] & S^p \ar[r] \ar[d] & 0 \\
0 \ar[r] & I \ar[r]^t \ar[d] & I' \ar[r] \ar[d] & I \ar[r] \ar[d] & 0 \\
 & 0 & 0 & 0 &
}
$$
and the sequence of the bottom is exact if and only if the sequence on the top is exact. Since the map $\mathrm{ker}(\varphi)\overset{t}{\rightarrow} \mathrm{ker}(\varphi')$ is clearly injective, we get that $I'$ is a first order deformation if and only if the map $\mathrm{ker}(\varphi')\rightarrow \mathrm{ker}(\varphi)$ is surjective. That is, if and only if every relation of $I$ lifts to a relation of $I'$. 
\end{proof}

We now want to calculate the dimension of the tangent space of the different polarizations of $m^d$.
\begin{Prop}
Let $S=k[x_{11},\dots,x_{nd}]$ and let $I\subset S$ be a polarization of the ideal $m^d$. If $n\ge 2$ and $d\ge 2$, then the dimension of the tangent space of $I$ in the Hilbert scheme equals $\mathrm{dim}_k (\mathrm{Hom}_S(I,S/I))_0$
\end{Prop}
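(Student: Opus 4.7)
My plan is to reduce the statement to the classical formula identifying the tangent space of the Hilbert scheme at a point with saturated homogeneous ideal with a certain graded Hom module. Specifically, for a closed subscheme $Y \subset \mathrm{Proj}(\widetilde{S})$ defined by a homogeneous ideal $J$ saturated with respect to the irrelevant ideal, the tangent space of the Hilbert scheme at $[Y]$ is canonically isomorphic to $(\mathrm{Hom}_{\widetilde{S}}(J, \widetilde{S}/J))_0$. So the task reduces to verifying that every polarization $I$ of $m^d$ is saturated with respect to the irrelevant ideal of $\widetilde{S}$, and then invoking this standard fact.

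To establish saturation I would compute the depth of $\widetilde{S}/I$ at the irrelevant ideal $\mathfrak{m} = (x_{11}, \dots, x_{nd})$. By the definition of a polarization, $\sigma$ is a regular $\widetilde{S}/I$-sequence of length $n(d-1)$ whose entries, being differences of variables, all lie in $\mathfrak{m}$. The quotient $\widetilde{S}/(I + \langle\sigma\rangle) \cong S/m^d$ is Artinian, hence of depth zero, so the standard depth formula for regular sequences yields
\[
\mathrm{depth}_\mathfrak{m}(\widetilde{S}/I) = n(d-1).
\]
Thus $\widetilde{S}/I$ is Cohen--Macaulay of dimension $n(d-1)$, and under the hypothesis $n, d \ge 2$ this depth is at least $2$, in particular strictly positive, so $I$ is saturated.

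With saturation in hand, the proposition follows by citing the standard identification of the Hilbert scheme tangent space with $(\mathrm{Hom}_{\widetilde{S}}(I, \widetilde{S}/I))_0$ (see for instance Sernesi's \emph{Deformations of Algebraic Schemes}, or the deformation theory reference \cite{hartshorne2009deformation} already cited in this section). The main obstacle is the depth computation, but it is essentially formal once one observes $\sigma \subset \mathfrak{m}$; no structural information about particular polarizations is required. Alternatively, one can argue directly via Lemma \ref{liftrelations}: the Hilbert scheme tangent space consists of first-order deformations preserving the Hilbert polynomial, which correspond to graded (degree-preserving) lifts of a generating set of $I$ modulo trivial deformations, and such lifts are parameterized precisely by the degree-$0$ part of $\mathrm{Hom}_{\widetilde{S}}(I, \widetilde{S}/I)$.
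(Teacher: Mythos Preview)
Your overall strategy matches the paper's: compute $\mathrm{depth}\,\widetilde S/I$ using the regular sequence $\sigma$, and use this to identify $H^0(Y,\mathcal N_{Y/X})$ with $(\mathrm{Hom}_{\widetilde S}(I,\widetilde S/I))_0$. The depth computation is fine; in fact you get the exact value $n(d-1)$, whereas the paper only records $\ge 2$.

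The gap is in the sentence ``With saturation in hand, the proposition follows by citing the standard identification\ldots''. Saturation of $I$ (equivalently $\mathrm{depth}\,\widetilde S/I\ge 1$) is \emph{not} the hypothesis that makes this identification go through. What one actually needs is that the module $N:=\mathrm{Hom}_{\widetilde S}(I,\widetilde S/I)$ satisfies $H^0_{\mathfrak m}(N)_0=H^1_{\mathfrak m}(N)_0=0$, since $\mathcal N_{Y/X}\cong\widetilde N$ and the comparison between $N_0$ and $H^0(\widetilde N)$ is governed by the local-cohomology exact sequence
\[
0\to H^0_{\mathfrak m}(N)\to N\to \bigoplus_\nu H^0(\widetilde N(\nu))\to H^1_{\mathfrak m}(N)\to 0.
\]
Neither Hartshorne nor Sernesi gives a black-box ``saturated $\Rightarrow$ tangent space $=\mathrm{Hom}_0$'' statement; both identify the tangent space with $H^0(\mathcal N_{Y/X})$, and the passage to the graded $\mathrm{Hom}$ is a separate step. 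This is precisely the step the paper carries out: from $\mathrm{depth}\,\widetilde S/I\ge 2$ it deduces $\mathrm{depth}\,N\ge 2$ via a free presentation of $I$, the induced four-term exact sequence of $\widetilde S/I$-modules, and a Tor/Auslander--Buchsbaum argument.

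Your input $\mathrm{depth}\,\widetilde S/I\ge 2$ is exactly what is needed to run that argument, so the fix is small: either reproduce the paper's computation, or invoke the general (and easy) fact that $\mathrm{depth}\,M\ge 2$ implies $\mathrm{depth}\,\mathrm{Hom}(L,M)\ge 2$ for any finitely presented $L$ (embed $\mathrm{Hom}(L,M)$ in a power of $M$ via a presentation of $L$, and check that a length-two $M$-regular sequence remains regular on the submodule). But as written, the appeal to ``saturation'' plus an unspecified standard reference does not close the argument.
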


\begin{proof}
If $X = \mathrm{Proj}(S)$ and $Y = \mathrm{Proj}(S/I)$, and if $H$ is the Hilbert scheme of $Y$ in $X$. Then the Zariski tangent space of $y\in H$, corresponding to $Y$ is $H^0(Y,\mathcal{N}_{Y/X})$. The normal sheaf $\mathcal{N}_{Y/X}$ is isomorphic to the sheaf $\mathcal{H}om_X(\mathcal{I}, \mathcal{O}_Y)$. By the long exact sequence
$$0\rightarrow H_m^0(M) \rightarrow M \rightarrow \sum_\nu H^0(\mathrm{Proj}(S/I), \widetilde{M}(\nu))\rightarrow H_m^1(M) \rightarrow 0,$$
which for example is found in \cite[Theorem A4.1]{eisenbud1995commutative}, we see that the global sections of $\mathcal{H}om_X(\mathcal{I}, \mathcal{O}_Y)$ can be identified with the vector space $(\mathrm{Hom}_S(I,S/I))_0$ if the depth of the module $\mathrm{Hom}_S(I,S/I)$ is greater than or equal to two. So that is what we want to show.

First of all, we observe that $\mathrm{depth}\,S/I\ge 2$. But this follows from Definition \ref{polarization}, since $(x_{11}-x_{12},\dots,x_{n1}-x_{nd})$ is a regular $S/I$-sequence, and if $n\ge 2$ and $d\ge 2$, it is clear that this sequence has length at least $2$.

By the Auslander--Buchbaum theorem (e.g \cite[Theorem 1.3.3]{bruns1998cohen}), we have that $\mathrm{depth}\,\mathrm{Hom}_S(I,S/I) = \mathrm{depth}\,S - \mathrm{pdim}\,\mathrm{Hom}_S(I,S/I)$, where pdim denotes the projective dimension. So $\mathrm{depth}\,\mathrm{Hom}_S(I,S/I)\ge 2$ if and only if $\mathrm{pdim}\,\mathrm{Hom}_S(I,S/I)\le nd-2$, where $nd=\mathrm{dim}\, S = \mathrm{depth}\, S$. By \cite[Corollary 1.3.2]{bruns1998cohen}, we have that $\mathrm{pdim}\,\mathrm{Hom}_S(I,S/I) = \mathrm{max}\{i\,|\,\mathrm{Tor}_i^S(\mathrm{Hom}_S(I,S/I), k)\neq 0\}$. So it is enough to verify that $\mathrm{Tor}_i^S(\mathrm{Hom}_S(I,S/I), k) = 0$ for $i=nd-1$ and $i=nd$. Consider a free presentation
$$S^r\longrightarrow S^p \longrightarrow I \longrightarrow 0.$$
This gives rise to a left exact sequence
$$0 \longrightarrow \mathrm{Hom}_S(I,S/I) \longrightarrow (S/I)^p \longrightarrow (S/I)^r,$$
and by extending with the cokernel of the last map, we get a long exact sequence
$$0 \longrightarrow \mathrm{Hom}_S(I,S/I) \longrightarrow (S/I)^p \longrightarrow (S/I)^r\longrightarrow C \longrightarrow 0.$$
This gives us two short exact sequences:
$$0 \longrightarrow \mathrm{Hom}_S(I,S/I) \longrightarrow (S/I)^p\longrightarrow K \longrightarrow 0,$$
and
$$0 \longrightarrow K \longrightarrow (S/I)^r\longrightarrow C \longrightarrow 0.$$
We can now tensor both these short exact sequences by $k$, and we get that $\mathrm{Tor}_{i}^S(\mathrm{Hom}_S(I,S/I), k)\cong \mathrm{Tor}_{i+1}^S(K,k)$ for $i\ge nd-1$, since $\mathrm{depth}\,(S/I)^p=\mathrm{depth}\,(S/I)\ge 2$. So $\mathrm{Tor}_{nd}^S(\mathrm{Hom}_S(I,S/I), k)=0$ because of Hilbert's syzygy theorem. Similarly, from the other sequence we also get that $\mathrm{Tor}_{i}^S(K,k)\cong \mathrm{Tor}_{i+1}^S(C,k)$, and Hilbert's syzygy theorem again gives us that $\mathrm{Tor}_{nd}^S(K,k)=0$. Using this, we also get that $\mathrm{Tor}_{nd-1}^S(\mathrm{Hom}_S(I,S/I), k)=0$, which completes the proof.

\end{proof}

If we fix a minimal generator set $\{f_i\}$ of $I$, then for each first order deformation $I'$ of $I$, we can find a minimal generator set $\{g_i=f_i+th_i\}$. We can now define the vector $\mathbf{v}_{I'} := [h_1,h_2,\dots,h_g]^T\in S^g$.

\begin{Prop}
The vector space $(\mathrm{Hom}_{S}(I, S/I))_0$ is isomorphic to the vector space spanned by the vectors $\mathbf{v}_{I'}$ of degree $d$ where $I'$ is a first order deformation of $I$.
\end{Prop}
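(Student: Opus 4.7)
The plan is to write down explicitly a $k$-linear map from the span of the vectors $\mathbf{v}_{I'}$ to $(\mathrm{Hom}_S(I,S/I))_0$ and check it is an isomorphism, using Lemma~\ref{liftrelations} in both directions. Given a first order deformation $I'$ with generators $g_i=f_i+th_i$, I would define $\varphi_{I'}\colon I\to S/I$ on generators by $\varphi_{I'}(f_i)=\bar h_i$ and extend $S$-linearly. Since each $h_i$ has degree $d$ and the $f_i$ have degree $d$, this map is homogeneous of degree $0$.

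The main check is that $\varphi_{I'}$ extends consistently across syzygies. Given any relation $\sum a_i f_i=0$ of $I$, Lemma~\ref{liftrelations} furnishes a lift to a relation $\sum(a_i+tb_i)g_i=0$ of $I'$; expanding and collecting the coefficient of $t$ yields $\sum a_i h_i+\sum b_i f_i=0$ in $S$, so $\sum a_i\bar h_i=0$ in $S/I$. Hence $\varphi_{I'}$ is a well-defined element of $(\mathrm{Hom}_S(I,S/I))_0$, and this assignment is visibly $k$-linear in $\mathbf{v}_{I'}$.

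For surjectivity I would run the construction backwards. Given $\varphi\in(\mathrm{Hom}_S(I,S/I))_0$, pick any lifts $h_i\in S_d$ of $\varphi(f_i)\in(S/I)_d$, and set $I'=(f_1+th_1,\dots,f_g+th_g)\subset S'$. For any syzygy $\sum a_if_i=0$ the homomorphism property gives $\sum a_ih_i\in I$, say $\sum a_ih_i=-\sum b_if_i$, and then $\sum(a_i+tb_i)(f_i+th_i)=0$ exhibits the required lift; Lemma~\ref{liftrelations} then shows that $I'$ is a first order deformation, with $\mathbf{v}_{I'}=[h_1,\dots,h_g]^T$ and $\varphi_{I'}=\varphi$.

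The only subtle point, which I expect to be the main obstacle to a clean presentation rather than to the proof itself, is the intrinsic ambiguity in $\mathbf{v}_{I'}$: two different minimal generator sets of the same $I'$ differ by $h_i\mapsto h_i+q_i$ with $q_i\in I$, so $\mathbf{v}_{I'}$ is only well-defined modulo $I^g$ in degree $d$. The map $\mathbf{v}_{I'}\mapsto\varphi_{I'}$ precisely kills this ambiguity, since $\bar h_i$ only depends on $h_i\bmod I$; passing to the span in $(S/I)^g_d$ (equivalently, in the vectors of degree $d$ after the allowed change of generators), the construction above provides a well-defined inverse, and we obtain the claimed isomorphism.
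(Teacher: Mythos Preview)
Your proposal is correct and follows essentially the same route as the paper: both arguments identify a first order deformation $I'$ with the homomorphism $f_i\mapsto\bar h_i$ and recognize $\mathbf v_{I'}$ as its image under the injection $\mathrm{Hom}_S(I,S/I)\hookrightarrow(S/I)^g$ induced by the presentation $S^g\twoheadrightarrow I$. The paper simply invokes the standard correspondence between first order deformations and $\mathrm{Hom}_S(I,S/I)$ (citing Hartshorne) and reads off the image, whereas you unpack that correspondence explicitly via Lemma~\ref{liftrelations}; your discussion of the ambiguity $h_i\mapsto h_i+q_i$ with $q_i\in I$ is in fact more careful than the paper's own treatment.
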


\begin{proof}
Consider the surjection $\varphi:S^p\rightarrow I$ from the free presentation of $I$. If we apply the functor $\mathrm{Hom}_S(-,S/I)$, we get an injection $\mathrm{Hom}_S(I,S/I) \rightarrow \mathrm{Hom}_S(S^g,S/I) \cong (S/I)^g$. If $I'$ is a first order deformation of $I$, it corresponds to a homomorphism in $\mathrm{Hom}_S(I,S/I)$, so via the injection into $(S/I)^g$, it also corresponds to an element there. Its image is the coset of $\mathbf{v}_{I'}$, and if $I'$ corresponds to a homomorphism of degree $0$, this vector is of degree $d$.
\end{proof} 

\subsection{Calculations}
In the rest of this section, we will explicitly calculate the dimension of the tangent space of different polarizations $P_d$ in the Hilbert scheme of $M_d$ in the ring $\widetilde{S}$. We will do so by constructing all ideals that are generated by polynomials $g=f+th$, where $f$ is a generator of $P_d$ and $h$ is a monomial of degree $d$, such that the relations of $P_d$ lift to relations of this new ideal. In this case, we will say that the generator $g$ is a deformation of the generator $f$. 

\begin{Def}
Let $I=(f_1,\dots, f_p)$ be an ideal with a given minimal generator set. If $I'$ is a first order deformation of $I$, and if $g\in I'$ is an element on the form $g=f_i+th_i$, we say that $g$ is a first order deformation of $f_i$.
\end{Def}

Using this termonology, we give two lemmas that will help us find all deformations of $P_d$.

\begin{Lemma}\label{pushdeformation}
If $f$ and $f' = f\cdot \frac{x_{ik}}{x_{jl}}$ are both generators of $P_d$, and if $f+tm$ is a first order deformation of $f$, for some monomial $m$ of degree $d$. Then either $x_{ik}\cdot m\in P_d$, i.e. $x_{ik}\cdot m$ is divisible by a generator of $P_d$, and we say that $m$ vanishes in $f'$, or otherwise $x_{jl}$ divides $m$ and $g'=f'+tm'$ must be a deformation of $f'$, where $m' = m\cdot \frac{x_{ik}}{x_{jl}}$. In this case we say that $m$ is pushed to $f'$.
\end{Lemma}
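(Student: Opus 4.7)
The plan is to apply Lemma \ref{liftrelations} to a single Koszul-type syzygy and then exploit the fact that $P_d$ is a monomial ideal. Since $f' = f\cdot x_{ik}/x_{jl}$, we have $x_{jl}\cdot f' = x_{ik}\cdot f$, so the equation
$$x_{ik}\cdot f - x_{jl}\cdot f' = 0$$
is a relation among the generators of $P_d$. Let $I'$ be the first-order deformation containing $g = f+tm$, and let $g' = f'+tm'$ be the deformation of $f'$ in $I'$ (with $g_h = f_h + tm_h$ denoting the deformations of the remaining generators).

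By Lemma \ref{liftrelations}, the above relation must lift to a relation of $I'$. The coefficients of such a lift must reduce modulo $t$ to $x_{ik},\,-x_{jl},\,0,\dots,0$, so the lift has the shape
$$(x_{ik}+t\alpha)\,g \;-\; (x_{jl}+t\beta)\,g' \;+\; t\sum_{h}e_h\,g_h \;=\; 0$$
for some polynomials $\alpha,\beta,e_h$ of degree one. Expanding modulo $t^2$, the constant-in-$t$ part recovers the original relation, and the coefficient of $t^1$ reads
$$x_{ik}\cdot m - x_{jl}\cdot m' \;=\; -\alpha f + \beta f' - \sum_h e_h f_h,$$
which in particular shows $x_{ik}\cdot m - x_{jl}\cdot m' \in P_d$.

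To finish, I would invoke that $P_d$ is a monomial ideal (as is the case for every polarization considered in this paper), so a polynomial lies in $P_d$ if and only if each of its monomial terms does. The left-hand side above has at most two monomial terms, both of degree $d+1$, so two cases arise: either $x_{ik}\cdot m$ and $x_{jl}\cdot m'$ are equal as monomials, forcing $x_{jl}\mid m$ and $m' = m\cdot x_{ik}/x_{jl}$; or they are distinct, in which case each term must individually lie in $P_d$, giving in particular $x_{ik}\cdot m \in P_d$, the "vanishing" case.

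I do not expect a serious obstacle here: the content of the lemma is essentially the translation of relation-lifting into a divisibility statement, and the main point to be careful about is the bookkeeping of the lifted relation — namely, justifying that the coefficients of $g$ and $g'$ can be written as $x_{ik}+t\alpha$ and $-(x_{jl}+t\beta)$ while all other $g_h$ enter with coefficients divisible by $t$. Once the containment $x_{ik}m - x_{jl}m' \in P_d$ is in hand, the dichotomy in the statement is immediate from the monomial-ideal property.
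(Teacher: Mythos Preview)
Your argument is correct and follows the same route as the paper's own proof: start from the Koszul syzygy $x_{ik}f - x_{jl}f' = 0$, invoke Lemma~\ref{liftrelations} to lift it, and read off the dichotomy from the $t$-coefficient using that $P_d$ is monomial. Your write-up is in fact more explicit than the paper's (which compresses everything into two sentences); the only unstated assumption you rely on---that the companion perturbation $m'$ is itself a monomial---is the same implicit convention the paper adopts in the surrounding discussion.
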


\begin{proof}
If $f+tm$ is a generator of an ideal that is a first order deformation of $P_d$, we know from Lemma \ref{liftrelations} that all relations of $P_d$ lift to relations of this new ideal. There is a relation $x_{ik}\cdot f - x_{jl} \cdot f'$ in $P_d$. In order to get a relation of the deformation $g=f+tm$, we need either that $x_{ik}\cdot m$ is divisible by a generator of $P_d$ or that $g' = f'+tm'$.
\end{proof}

\begin{Lemma}
Let $f$ and $f'$ be as in Lemma \ref{pushdeformation} above. If there is a first order deformation $J$ of $P_d$, such that $f + tm_1+tm_2\in J$ is a first order deformation of $f$, then there is a first order deformation $J'$ of $P_d$ such that $f+tm_1\in J'$ is a first order deformation of $f$.
\end{Lemma}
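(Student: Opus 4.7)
The plan is to construct $J'$ directly by identifying the ``$m_1$-trace'' inside $J$ and propagating it via the push mechanism of Lemma \ref{pushdeformation}. First, I would establish a preliminary monomial-wise refinement of that lemma: assuming $P_d$ is a monomial ideal (as in the box and standard cases), if $J$ is any first-order deformation of $P_d$ with generator perturbations $h_{f_i}$, then for every relation $x_{ik}f_i - x_{jl}f_{i'}$ of $P_d$ and every monomial summand $m''$ of $h_{f_i}$, one has either $x_{ik}m''\in P_d$, or else $x_{jl}\mid m''$ and $m''\cdot x_{ik}/x_{jl}$ is itself a summand of $h_{f_{i'}}$. This is immediate because distinct monomials outside $P_d$ are $k$-linearly independent in $S/P_d$, so the lifting equation $x_{ik}h_{f_i}\equiv x_{jl}h_{f_{i'}}\pmod{P_d}$ decomposes monomial-by-monomial.

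Next, define $\mathcal{S}$ to be the smallest set of pairs $(f_i, m')$ (with $f_i$ a generator of $P_d$ and $m'$ a degree-$d$ monomial outside $P_d$) that contains $(f, m_1)$ and is closed under valid pushes. The key property is that $\mathcal{S}$ contains at most one pair with any given first coordinate $f_i$: any push-chain from $(f, m_1)$ to $f_i$ multiplies $m_1$ by the successive ratios $x_{ik}/x_{jl}$, which telescope to $f_i/f$ in the fraction field, so the monomial reached at $f_i$ is forced to be $m_1\cdot(f_i/f)$. Set $J'$ to be the ideal of $S'$ generated by $\{f_i + th'_{f_i}\}$, where $h'_{f_i}=m'$ if $(f_i,m')\in\mathcal{S}$ and $h'_{f_i}=0$ otherwise; in particular $h'_f=m_1$. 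A short induction along push-chains, using the preliminary refinement applied to $J$, shows that every monomial in $\mathcal{S}$ at $f_i$ is already a monomial summand of $h_{f_i}$ in $J$.

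Finally, to verify $J'$ is a first-order deformation, by Lemma \ref{liftrelations} it suffices to lift every relation $x_{ik}f_i - x_{jl}f_{i'}$ of $P_d$. I would run a case analysis on whether $(f_i,\cdot)$ and $(f_{i'},\cdot)$ lie in $\mathcal{S}$: when both lie in $\mathcal{S}$ with a valid push between them, the two sides match exactly by construction; when one lies in $\mathcal{S}$ with a would-be nontrivial push, closure of $\mathcal{S}$ would force the other to lie in $\mathcal{S}$ as well, so this configuration cannot occur; in the remaining configurations, the monomial-wise refinement applied to the summand of $h_{f_i}$ (resp.\ $h_{f_{i'}}$) in $J$ forces the corresponding term to vanish modulo $P_d$, and the lifting reduces to $0\equiv 0\pmod{P_d}$. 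The main obstacle is the bookkeeping in this case analysis, in particular the ``mixed'' case where exactly one side belongs to $\mathcal{S}$; resolving it requires repeatedly invoking the monomial-wise condition inherited from the hypothesized deformation $J$.
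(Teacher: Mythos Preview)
Your approach is correct and considerably more thorough than the paper's. The paper gives essentially a one-relation sketch: it looks only at the single relation $x_{ik}f - x_{jl}f'$ named in Lemma~\ref{pushdeformation}, lists four ways that relation can lift in $J$, and observes that in each case one may simply drop $m_2$---the only nontrivial point being that $x_{ik}(m_1+m_2)\in P_d$ forces $x_{ik}m_1\in P_d$ because $P_d$ is a monomial ideal. The paper never constructs $J'$ globally nor checks the remaining relations; it just asserts that ``$J'$ can be built.'' Your argument actually carries this out: you propagate $m_1$ through the generating set via push-chains to define $h'_{f_i}$ everywhere and then verify every linear syzygy case by case. In that sense your proof is a rigorous completion of the paper's sketch rather than a genuinely different idea.

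Two remarks. First, once ``valid push'' is read (as it should be, following Lemma~\ref{pushdeformation}) to mean $x_{ik}m'_a\notin P_d$, your ``short induction'' and the appeal to the summand property inside $J$ become unnecessary in the final case analysis: in the mixed case the mere failure of the push already gives $x_{ik}m'_a\in P_d$ directly, so the relation lifts with no reference back to $J$. Second, there is a one-line conceptual argument that bypasses all the bookkeeping: since $P_d$ is a monomial ideal, $\mathrm{Hom}_S(P_d,S/P_d)$ inherits the fine $\mathbb{Z}^{nd}$-grading, and projecting the homomorphism corresponding to $J$ onto the graded piece of multidegree $\deg(m_1)-\deg(f)$ already yields a first-order deformation $J'$ with $f+tm_1\in J'$. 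Both the paper's sketch and your push-propagation are, in effect, unpacking this projection by hand.
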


\begin{proof}
Just as in the proof of Lemma \ref{pushdeformation} above, we look at the relation $x_{ik}\cdot f - x_{jl} \cdot f'$ in $P_d$. In order to get a relation of the deformation $g=f+tm_1+tm_2$, we must have one of the following:
\begin{itemize}
\item[i)]
$x_{ik}\cdot(m_1+m_2)\in P_d$. 
\item[ii)]
$x_{ik}\cdot m_1 \in P_d$ and $g'=f'+tm_2$.
\item[iii)]
$x_{ik}\cdot m_2 \in P_d$ and $g'=f'+tm_1$.
\item[iv)]
$g'=f'+tm_1+tm_2$.
\end{itemize} 
If ii), iii) or iv) is the case it follows immediately that we can just set $m_2 = 0$ and the first order deformation $J'$ can be built. But this is also possible if i) is the case, because if $x_{ik}\cdot(m_1+m_2)\in P_d$, we must have $x_{ik}\cdot m_1 \in P_d$ since $P_d$ is a monomial ideal.

\end{proof}

We want to study the first order deformations which are generated by deformations $g=f+tm$ for a monomial of degree $d$.

\begin{Lemma}\label{PushToVertex}
Let $P_d$ be a square-free polarization of $m^d$, which means that the vertices of $P_d$ are the monomials $v_i=x_{i1}x_{i2}\cdots x_{id}$ for $i=1,\dots,n$. If $g=f+tm$ is a first order deformation of a generator of $P_d$, then it can be pushed successively to at most one vertex of $P_d$.
\end{Lemma}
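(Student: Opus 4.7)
The plan is to identify a conserved quantity of the push operation, use it to pin down what the monomial at any reachable vertex must be, and then show that reaching two distinct vertices forces a contradiction. The key invariant is easy to spot: a single push step sends $(f_0,m_0)$ to $(f_0\cdot x_{ik}/x_{jl},\,m_0\cdot x_{ik}/x_{jl})$, so the ratio $m_0/f_0$ is preserved throughout any push sequence. Consequently, if the push terminates at a vertex $v_i$ with monomial $m_i$, one has $m_i = m\cdot v_i/f$ as a rational function, and this expression must in fact be a genuine monomial; equivalently, $f$ must divide $m\cdot v_i$.

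Next I would exploit the square-free hypothesis: the generator $f$ is a square-free monomial of degree $d$, and each vertex $v_i = x_{i1}\cdots x_{id}$ is supported on the variable set $\{x_{i1},\dots,x_{id}\}$, and these sets for different $i$ are disjoint. The divisibility $f\mid m\cdot v_i$ therefore translates to the concrete condition that every variable $x_{kl}$ dividing $f$ with $k\ne i$ already divides $m$. Suppose towards contradiction that the push from $(f,m)$ reaches two distinct vertices $v_i$ and $v_j$. Applying the above criterion once for $i$ and once for $j$, and using that $\{x_{i1},\dots,x_{id}\}$ and $\{x_{j1},\dots,x_{jd}\}$ are disjoint, leaves no variable of $f$ unaccounted for: every variable dividing $f$ must divide $m$. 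Since $\deg f = \deg m = d$, this forces the equality $m=f$.

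The last step is to dispose of this degenerate conclusion. If $m=f$ then $m\in P_d$, so $x_{ik}\cdot m\in P_d$ for every variable $x_{ik}$; Lemma \ref{pushdeformation} then dictates that $m$ vanishes in every neighboring direction, meaning no push is possible at all from $(f,m)$. This contradicts the initial hypothesis that $(f,m)$ could be pushed to a vertex, let alone to two of them. The only subtle point I anticipate is precisely this trivial case: naively it looks as though $m=f$ should keep propagating (since setting $m_0=f_0$ at every step would always produce a valid monomial), but the ``vanish'' clause of Lemma \ref{pushdeformation} actually blocks the very first push whenever $m$ already lies in $P_d$, which is what makes the argument go through.
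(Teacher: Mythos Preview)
Your proof is correct and follows essentially the same route as the paper: both argue that if $m$ can be pushed to $v_a$ then every variable of $f$ with first index $\ne a$ must divide $m$, so reaching two distinct vertices forces $f\mid m$ and hence $m=f$. Your conserved-ratio formulation is a clean way to package this divisibility observation; the only cosmetic difference is in the endgame---the paper simply declares that $f\mid m$ means $f+tm$ is not a \emph{proper} deformation and stops, whereas you invoke the vanishing clause of Lemma~\ref{pushdeformation} to block any push (note that lemma really asserts that \emph{at least} one alternative holds, so the cleaner conclusion is just that $m=f$ gives the trivial deformation, which is implicitly excluded).
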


\begin{proof}
Suppose that $f = x_{i_1j_1}\cdots x_{i_dj_d}$ is a generator of $P_d$, and suppose that there is a deformation $f+tm$ for a monomial $m$. Suppose now that we can push the deformation (as described in Lemma \ref{pushdeformation}) successively into two vertices $v_a=x_{a1}x_{a2}\cdots x_{ad}$ and $v_b=x_{b1}\cdots x_{bd}$ for $a\neq b$. This means that $m$ must be divisible by all monomials $x_{i_tj_t}$ that divides $f$ such that $i_t\neq a$. On the other hand, $m$ must be divisible by all monomials $x_{i_tj_t}$ that divides $f$ such that $i_t\neq b$. But that means that $m$ must be divisible by all of these variables, so $m$ is divisible by $f$, and $f+tm$ is not a proper first order deformation. So the result follows. 
\end{proof}

We now introduce the two natural polarizations of $m^d$.

\begin{Def}
Let $m^d$ be the $d$-th power of the maximal ideal in the polynomial ring $S=k[x_1,\dots,x_n]$. The polarization
$$B_{nd} = \left( x_{i_11}x_{i_22}\cdots x_{i_dd} \, \middle| \, 1\le i_1 \le i_2 \le \cdots \le i_d \le n\right)$$
in $\widetilde{S} = k[x_{11},\dots,x_{1d},\dots,x_{n1},\dots, x_{nd}]$ is called the box polarization of $m^d$. 
\end{Def}

\begin{Def}
The polarization
$$P_{nd} = \left( x_{11}\cdots x_{1i_1}\cdot x_{21}\cdots x_{2i_2}\cdots x_{n1}\cdots x_{ni_n}\middle|i_j\ge 0 \text{ and } i_1+i_2+\cdots i_n = d\right)$$
is called the standard polarization of $m^d$. 
\end{Def}
The fact that the standard polarization is a polarization is well known and straight forward to check. The fact that $B_{nd}$ is a polarization can be found in \cite[Theorem 3.12]{NR2009Betti} or \cite[Theorem 3.4]{yanagawa2012alternative}.
\medskip

\begin{Example}
The ideal $P_{32}$ is the ideal
$$P_{32}=\left(x_{11}x_{12}, x_{11}x_{21}, x_{11}x_{31}, x_{21}x_{22}, x_{21}x_{31}, x_{31}x_{32}\right)$$
\end{Example}
\medskip

\begin{Example}
The ideal $B_{33}$ is the ideal
\begin{align*}
B_{33} = & \left(x_{11}x_{12}x_{13}, x_{11}x_{12}x_{23}, x_{11}x_{12}x_{33}, x_{11}x_{22}x_{23}, x_{11}x_{22}x_{33},\right. \\ 
& \left. x_{11}x_{32}x_{33}, x_{21}x_{22}x_{23}, x_{21}x_{22}x_{33}, x_{21}x_{32}x_{33}, x_{31}x_{32}x_{33}\right)
\end{align*}
\end{Example}

\begin{Lemma}\label{pushtoatleastonevertex}
Every first order deformation of a generator of $B_{nd}$ and $P_{nd}$ can be pushed to exactly one vertex.
\end{Lemma}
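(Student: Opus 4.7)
My plan is to show that for any first order deformation $f + tm$ of a generator $f$, the pushing process eventually reaches a vertex; combined with Lemma \ref{PushToVertex} (which gives ``at most one''), this gives \emph{exactly} one vertex. I would argue by induction on the graph distance from $f$ to the nearest vertex, where the generator graph has an edge between any two neighbors differing by a single swap as in Lemma \ref{pushdeformation}. The base case is immediate when $f$ is already a vertex, so the inductive step reduces to the key claim: \emph{if $f$ is not a vertex, then $m$ admits a valid push to at least one neighbor of $f$.}

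For $B_{nd}$, write $f = x_{i_1,1} \cdots x_{i_d, d}$ with $i_1 \le \cdots \le i_d$. Non-vertex status produces some index $k$ with $i_k < i_{k+1}$, and this jump yields two natural candidate neighbors obtained by swapping in columns $k$ or $k+1$: namely $f_a = f \cdot x_{i_{k+1}, k}/x_{i_k, k}$ and $f_b = f \cdot x_{i_k, k+1}/x_{i_{k+1}, k+1}$, both of which are generators by the sorting constraint. By Lemma \ref{pushdeformation}, the push to $f_a$ fails only if $x_{i_{k+1}, k} \cdot m \in B_{nd}$, and similarly for $f_b$. To analyze these vanishing conditions, I would observe that any generator $g$ of $B_{nd}$ dividing $x_{i_{k+1}, k} \cdot m$ either uses the factor $x_{i_{k+1}, k}$ in its $k$-th slot (which forces the remaining variables of $g$ to divide $m$, imposing structure on $m$) or does not (in which case $g$ divides $m$, so $m \in B_{nd}$ and the deformation is trivial, and the claim holds vacuously). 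A careful accounting of the two cases should show that a nontrivial $m$ failing both candidate pushes at $f_a$ and $f_b$ must itself contain one of the exit variables of $f$, supplying the required push at some other neighbor.

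For $P_{nd}$, the strategy is analogous. Writing $f$ via a composition $(a_1, \ldots, a_n)$ of $d$ with at least two nonzero entries, I would pick indices $k, \ell$ with $a_k > 0$ and consider the neighbor $f' = f \cdot x_{\ell, a_\ell + 1}/x_{k, a_k}$; the push fails only if $x_{\ell, a_\ell + 1} \cdot m \in P_{nd}$, which I would analyze via the same dichotomy, exploiting the rigid ``staircase'' structure of $P_{nd}$ (namely that a generator uses $x_{i,j}$ only if it also uses $x_{i,j-1}$).

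The main obstacle will be the combinatorial bookkeeping for the vanishing case: ruling out configurations where several generators of the ideal combine to block every candidate push at $f$ simultaneously, by showing that such configurations force $m \in B_{nd}$ (resp.\ $P_{nd}$). Once that is in hand, the induction closes and the pushing procedure is guaranteed to terminate at a vertex.
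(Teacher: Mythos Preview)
Your inductive framework has a genuine gap. You induct on the graph distance from $f$ to the \emph{nearest} vertex, but the inductive step you isolate --- ``if $f$ is not a vertex, then $m$ admits a valid push to at least one neighbor'' --- does not close that induction: the neighbor you produce need not be any closer to a vertex than $f$ was. Your two candidates $f_a$ and $f_b$ move toward \emph{different} vertices (one increases the $k$-th entry, the other decreases the $(k{+}1)$-st), and your fallback ``some other neighbor'' is even less constrained. Nothing prevents the sequence of pushes from wandering through the generator graph without ever reaching a vertex; since both the generator and the deformation monomial change at each step, there is no obvious decreasing potential, and the induction is not well-founded as stated. To rescue this line you would need to show that one can always push in a fixed direction (e.g.\ always toward $v_n$, or always so that $\sum_j i_j$ strictly increases), which is a different and stronger claim than the one you formulate.

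The paper avoids this issue altogether by arguing globally rather than locally. Instead of single-swap neighbors, it uses for each vertex $v_a$ the direct syzygy
\[
\Bigl(\textstyle\prod_{j:\,i_j\neq a} x_{aj}\Bigr)\cdot f \;-\; \Bigl(\textstyle\prod_{j:\,i_j\neq a} x_{i_j j}\Bigr)\cdot v_a,
\]
so that failure to push to $v_a$ forces $\bigl(\prod_{j:\,i_j\neq a} x_{aj}\bigr)\cdot m\in B_{nd}$. Assuming this simultaneously for all $a$, a combinatorial analysis through the successive blocks of equal entries of $f$ shows that $m$ itself lies in $B_{nd}$, contradicting properness. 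For $P_{nd}$ the same scheme is even shorter: the vanishing conditions at each vertex force $f\mid m$. Thus the paper's argument is a single contradiction organized around $n$ relations, rather than an unbounded walk through neighbors; the ``combinatorial bookkeeping'' you anticipate is indeed the heart of the matter, but it is packaged so that no termination argument is needed.
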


The proof of this Lemma is not hard, but quite technical. We therefore illustrate the idea of the proof with an example before giving the proof.
\medskip

\begin{Example}
Suppose that the generator $f = x_{11}x_{22}x_{33}$ of $B_{33}$ is deformed to $x_{11}x_{22}x_{33}+tm$ in a first order deformation of $B_{33}$. We know that there are relations
$$
\begin{array}{l}
x_{22}x_{33}\cdot x_{11}x_{12}x_{13} - x_{12}x_{13}\cdot f,  \\
x_{11}x_{33}\cdot x_{21}x_{22}x_{23} - x_{21}x_{23}\cdot f, \text{ and} \\
x_{11}x_{22}\cdot x_{31}x_{32}x_{33} - x_{31}x_{32}\cdot f
\end{array}
$$
in $B_{33}$. If $f+tm$ is a deformation of $f$, then we know, as in Lemma \ref{pushdeformation}, that either the deformation vanishes in all relations or it can be pushed into a vertex. If we assume that it vanish for all the relations above, it means that $x_{12}x_{13}\cdot m$, $x_{21}x_{23}\cdot m$ and $x_{31}x_{32}\cdot m$ are all divisible by generators of $B_{33}$. Since we assume that $f+tm$ is a proper deformation, we assume that $m$ is not divisible by a generator of $B_{33}$. We now observe that since $x_{12}x_{13}\cdot m$ is divisible by a generator, while $m$ is not, it means that the generator that divides the product must be divisible by $x_{12}$ or $x_{13}$ (or both). But by the construction of $B_{33}$, we see that the only generators which satisfies this is also divisible by $x_{11}$. A similar argument shows that $m$ also has to be divisible by $x_{33}$. An finally, we observe that since $x_{21}x_{23}\cdot m$ is divisible by a generator, it means that $m$ is divisible by $x_{j2}$ for at least one $j$ in the range $1\le j \le 3$. This means that $m$ is divisible by $x_{11}x_{j2}x_{33}$, but this is already a generator of $B_{33}$, so we reach a contradiction.
\end{Example}

\begin{proof}[Proof of Lemma \ref{pushtoatleastonevertex}]
In view of Lemma \ref{PushToVertex}, it is enough to show that if $m$ is a proper deformation of a generator $f$, then $m$ can be pushed to at least one vertex. First we consider the box polarization $B_{nd}$.

Let $f = x_{i_11}x_{i_22}\cdots x_{i_dd}$ be a generator of $B_{nd}$, and suppose that there is a deformation $f+tm$ for a monomial $m$. Suppose that $m$ vanishes when pushed toward all vertices $x_{a1}\cdots x_{an}$. This means that 
$$\left(\prod_{\{j\,|\,i_j \neq a\}} x_{a j}\right) \cdot m$$
is divisible by a generator for all $a$ such that $1\le a\le n$. But as we will see, this can only happen if $m$ is divisible by a generator.

First of all, let $l_1$ be the greatest index such that $i_{l_1}=i_1$. So $i_{l_1}<i_j$ for $l_1<j$. Then, we have that $\left(\prod_{\{j\,|\,i_j \neq i_{l_1}\}} x_{i_{l_1} j}\right) \cdot m = x_{i_{l_1} (l_1+1)}\cdots x_{i_{l_1} d} \cdot m$ is divisible by a generator. But this means that $m$ is divisible by a monomial $x_{b_1 1}\cdots x_{b_{l_1} l_1}$ such that $b_1\le b_2 \le \cdots \le b_{l_1} \le i_{l_1}$. Next, let $l_2$ be the greatest index such that $i_{l_2}=i_{l_1+1}$. Here we also have that $\left(\prod_{\{j\,|\, i_j \neq i_{l_2}\}} x_{i_{l_2} j}\right) \cdot m = x_{i_{l_2} 1}\cdots x_{i_{l_2} l_1}\cdot x_{i_{l_2} (l_2+1)}\cdots x_{i_{l_2} d} \cdot m$ is divisible by a generator. This can only happen if one of the following is the case:

\begin{itemize}
\item[1.]
$m$ is divisible by $x_{b_{l_1+1} (l_1+1)}\cdots x_{b_d d}$ where $i_{l_2}\le b_{l_1+1}\le \cdots \le b_d$. But if we combine this with the first case, we get that $m$ is divisible by $x_{b_1 1}\cdots x_{b_d d}$, where $b_1\le \cdots \le b_{l_1} \le i_{l_1} \le i_{l_2} \le b_{l_1+1}\le \cdots \le b_d$. In this case $x_{b_1 1}\cdots x_{b_d d}$ is a generator of $B_{nd}$, and $m$ is not a proper deformation.

\item[2.]
$m$ is divisible by $x_{c_1 1}\cdots x_{c_{l_2} l_2}$ where $c_1\le c_2 \le \cdots \le c_{l_2} \le i_{l_2}$.

\item[3.]
$m$ is divisible by $x_{c_{l_1+1} l_1+1}\cdots x_{c_{l_2} l_2}$ where $i_{l_1}\le c_{l_1+1} \cdots \le c_{l_2} \le i_{l_2}$. Combining this result with the case above, we get that $m$ is divisible by $x_{b_1 1}\cdots x_{b_{l_1} l_1}\cdot x_{c_{l_1+1} l_1+1}\cdots x_{c_{l_2} l_2}$, where $b_1\le \cdots b_{l_1} \le c_{l_1+1}\le c_{l_2}\le i_{l_2}$.
\end{itemize}

So, if 1. is the case, then $m$ is divisible by a generator of $B_{nd}$. If either 2. or 3. is the case, then we get that $m$ is divisible by  $x_{c_1 1}\cdots x_{c_{l_2} l_2}$ where $c_1\le c_2 \le \cdots \le c_{l_2} \le i_{l_2}$.

Continuing the same way, we may define $l_t$ for all $t$ untill $l_t = i_d$. But then we get that either $m$ is divisible by a generator of $B_{nd}$ as in 1. above, or that $m$ is divisible by $x_{c_1 1}\cdots x_{c_{i_d} d}$ for a sequence $c_1\le \cdots c_d \le i_d$. But this is a generator of $B_{nd}$

Next, we show that the same result holds for the standard polarization $P_{nd}$.

Let $f = x_{i_11}\cdots x_{i_1d_1}x_{i_21}\cdots x_{i_2d_2}\cdots x_{i_r1}\cdots x_{i_rd_r}$ be a generator of $P_{nd}$, and suppose that there is a deformation $f+tm$ for a monomial $m$. Suppose that $m$ vanishes when pushed toward all vertices $x_{a1}\cdots x_{an}$. But this means that $\left(\prod_{\{j\,|\,x_{aj}\nmid m\}} x_{a j}\right) \cdot m$ is divisible by a generator of $P_{nd}$, for all $a$ such that $1\le a\le n$. In particular, that means that $x_{i_j d_j+1}\cdots x_{i_j d}m$ is divisible by a generator for all $1\le j \le r$. But this is only possible if either $m$ is divisible by a generator, or if $m$ is divisible by $x_{i_j 1}\cdots x_{i_j d_j}$ for all $1\le j\le r$. Since we assume that $m$ is a proper deformation of $f$, we must have that $m$ is divisible by $x_{i_j 1}\cdots x_{i_j d_j}$ for all $1\le j\le r$. But again, that means that $m$ is divisible by $f$, which is a contradiction.
\end{proof} 

\section{The box polarization}\label{theboxpolarization}
We will now show that the box polarization corresponds to a smooth point on the Hilbert scheme of $M_d$. This will be done, by first showing that $B_{nd}$ lies on a component where the dimension is known. Then we calculate the dimension of the tangent space of $B_{nd}$ and show that this dimension is the same as the component.

\begin{Prop}\label{boxpolinit}
Let $M_{nd}$ be the matrix
$$
\left(
\begin{array}{ccccccc} 
x_{11} & x_{21} & \cdots & x_{n1} & 0 & \cdots & 0 \\
0 & x_{12} & x_{22} & \cdots & x_{n2} &  \cdots & 0 \\
\vdots &  & \ddots & & &\ddots & \vdots \\
0 & \cdots & 0  & x_{1d} & x_{2d} & \cdots & x_{nd}
\end{array}
\right),
$$
and let $I$ be the ideal generated by its $d\times d$-minors. Then
$$B_{nd} = \mathrm{in}_{<}(I)$$
for lexicographic term order $<$, with $x_{ij}<x_{i'j'}$ if $j<j'$, or $i<i'$ and $j=j'$. This means that $B_{nd}$ and $I$ lies on the same component of the Hilbert scheme of $M_d$.
\end{Prop}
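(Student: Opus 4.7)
The plan is to prove the statement in two stages: first to identify the initial ideal of $I$ as $B_{nd}$ by an explicit analysis of the $d\times d$-minors of the staircase matrix $M_{nd}$, and then to invoke standard Gr\"obner degeneration theory to conclude that $I$ and $B_{nd}$ lie on a common irreducible component of the Hilbert scheme of $M_d$.

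For the initial ideal computation, I first observe that $M_{nd}$ is the $d\times (n+d-1)$ matrix whose $(k,j)$-entry is $x_{j-k+1,\,k}$ when $k\le j\le k+n-1$ and zero otherwise. A choice of columns $j_1 < j_2 < \cdots < j_d$ yields a non-vanishing $d\times d$-minor exactly when $k\le j_k\le k+n-1$ for every $k$, and in that case the main diagonal of the selected submatrix is the product $\prod_{k=1}^d x_{i_k,\,k}$ with $i_k := j_k - k + 1$. The column constraints translate into $1\le i_1 \le i_2 \le \cdots \le i_d \le n$, so this main-diagonal monomial is precisely a generator of $B_{nd}$, and the non-vanishing minors are in bijection with the generators of $B_{nd}$.

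Next, I verify that under the specified lex order, in which $x_{ij}$ grows primarily with $j$ and secondarily with $i$, the main diagonal is the leading term of the corresponding minor. Every summand in the Leibniz expansion has the form $\pm\prod_k x_{j_{\sigma(k)} - k + 1,\,k}$ for some permutation $\sigma$, and contains exactly one variable with second index $k$ for each $k = 1, \ldots, d$. Compared to the main diagonal, the $\sigma$-summand is first weighed against it on its $j=d$ factor: the main diagonal supplies $x_{j_d - d + 1,\,d}$ while $\sigma$ supplies $x_{j_{\sigma(d)} - d + 1,\,d}$, and $j_d \ge j_{\sigma(d)}$ with equality iff $\sigma(d) = d$. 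Iterating the comparison for $j = d-1, d-2, \ldots$ forces $\sigma = \mathrm{id}$ to produce the lex-largest monomial, so the main diagonal is the initial term.

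The main obstacle is to show that the non-vanishing minors actually form a Gr\"obner basis of $I$, not merely that their leading terms lie in $\mathrm{in}_<(I)$; only the inclusion $B_{nd}\subseteq\mathrm{in}_<(I)$ is immediate from the previous step. This is a version of the classical theorem that maximal minors of a one-sided ladder matrix form a Gr\"obner basis with respect to a diagonal term order, and I would cite the appropriate result from the theory of ladder determinantal ideals (Herzog--Trung, Conca, Sturmfels), or---failing a direct citation---verify Buchberger's criterion on S-pairs by reducing using the Pl\"ucker-type straightening relations among the minors. Once $\mathrm{in}_<(I) = B_{nd}$ is established, the standard Gr\"obner degeneration construction produces a flat one-parameter family of ideals whose generic fiber is $I$ and whose special fiber is $B_{nd}$; the induced morphism from $\mathbb{A}^1$ into the Hilbert scheme traces an irreducible curve through both $[I]$ and $[B_{nd}]$, which must therefore lie on a common irreducible component.
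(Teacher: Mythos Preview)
Your proposal is correct and essentially parallels the paper's approach: both rest on the fact that the maximal minors form a Gr\"obner basis whose leading terms are the main-diagonal monomials, for which the paper simply cites Sturmfels' 1990 result on generic maximal minors (Theorem~1 and Lemma~5 of \cite{sturmfels1990grobner}) and then specializes the off-staircase entries to zero, while you compute the leading terms directly on $M_{nd}$ and invoke the same class of results for the Gr\"obner-basis claim. The Gr\"obner degeneration argument you spell out for the Hilbert scheme conclusion is exactly the standard one implicit in the paper's final sentence.
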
 

\begin{proof}
This follows from Theorem 1 and Lemma 5 in \cite{sturmfels1990grobner}. They show that the $d\times d$-minors of the matrix 
$$
\left(
\begin{array}{ccccccc} 
x_{11} & x_{21} & \cdots & x_{n1} & x_{(n+1) 1} & \cdots & x_{(n+d-1) 1} \\
x_{02} & x_{12} & x_{22} & \cdots & x_{n2} &  \cdots & x_{(n+d-2) 2} \\
\vdots &  & \ddots & & &\ddots & \vdots \\
x_{(2-d) d}  & \cdots & x_{0 d}  & x_{1d} & x_{2d} & \cdots & x_{nd}
\end{array}
\right),
$$
under the same term order, are a reduced Gr\"{o}bner basis and that the leading term of each $d\times d$-minors are exactly the generators of $B_{nd}$. So the $d\times d$-minors of $M_{nd}$ will also be a reduced Gr\"{o}bner basis, and the leading terms are also the generators of $B_{nd}$.

\end{proof}

\begin{Prop}\label{dimcomponent}
Let $M$ be a $d\times (n+d-1)$-matrix with general linear entries from $\widetilde{S}$, and let $I$ be the ideal generated by its $d\times d$-minors. Then $I$ is a smooth point on the Hilbert scheme, and the dimension of its component is
$$d(d+n-1)nd-d^2-(d+n-1)^2+1.$$
\end{Prop}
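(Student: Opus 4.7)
The strategy is to parametrize a Zariski open neighborhood of $[I]$ in its component by the space $\mathbb{M}$ of $d\times(n+d-1)$ matrices with linear entries in $\widetilde{S}$, modulo row and column operations.

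Since $\widetilde{S}$ has $nd$ variables, $\dim\mathbb{M}=d(n+d-1)\cdot nd$. On a dense open subset $U\subseteq\mathbb{M}$, the ideal of maximal minors of $M\in U$ is a standard determinantal ideal of codimension $n$, and by Proposition \ref{boxpolinit} its Hilbert polynomial coincides with that of $\widetilde{S}/M_d$ (for $B_{nd}$ is the initial ideal of the minors of the specific matrix $M_{nd}$ from Proposition \ref{boxpolinit}, which is itself a specialization of a generic matrix with linear entries, and both passing to initial ideals and flat specialization preserve Hilbert polynomials). Taking maximal minors therefore defines a morphism $\Phi:U\to\mathrm{Hilb}$ whose image lies in a single irreducible component $W$ containing $[I]$.

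The group $G=(GL_d\times GL_{n+d-1})/k^{*}$ acts on $\mathbb{M}$ by left and right multiplication, with $\dim G = d^2+(n+d-1)^2-1$, and $\Phi$ is constant on $G$-orbits. For generic $M$, the matrix $M$ can be recovered (up to the $G$-action) from the linear syzygies in the Eagon--Northcott resolution of $I$, so the fiber of $\Phi$ over $\Phi(M)$ equals the $G$-orbit of $M$. Consequently,
$$\dim W \;\geq\; d(n+d-1)nd - d^{2} - (n+d-1)^{2} + 1.$$

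For smoothness, it suffices to bound the tangent space $T_{[I]}\mathrm{Hilb}=\mathrm{Hom}_{\widetilde{S}}(I,\widetilde{S}/I)_{0}$ from above by the same number. I would argue that every first order deformation of $I$ arises as the ideal of maximal minors of $M+tM'$ for some $M'\in\mathbb{M}$; granting this, the tangent space is identified with the cokernel of the infinitesimal $G$-action on $\mathbb{M}$ at $M$, which for generic $M$ has dimension exactly $d(n+d-1)nd - d^{2} - (n+d-1)^{2} + 1$. The chain $\dim W\leq\dim T_{[I]}W\leq\dim T_{[I]}\mathrm{Hilb}\leq\dim W$ then forces equalities throughout, yielding smoothness at $[I]$. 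The main obstacle is this lifting claim: for codimension $n=2$ it is Schaps' theorem, while in general one proceeds by lifting the Eagon--Northcott complex resolving $I$ to $\widetilde{S}[t]/t^{2}$ and showing that any such lift is necessarily determined by a perturbation $M+tM'$ of the defining matrix, so that deformations of $I$ and deformations of $M$ modulo $G$ are in bijection.
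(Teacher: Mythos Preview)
The paper's own proof of this proposition is simply a citation to results of Kleppe and of Faenzi; it does not argue anything directly. Your proposal, by contrast, tries to reproduce the argument behind those results: parametrize determinantal subschemes by the affine space $\mathbb{M}$ of $d\times(n+d-1)$ matrices of linear forms, quotient by the $(GL_d\times GL_{n+d-1})/k^*$-action, and match the resulting dimension with the tangent space. This is indeed the strategy underlying the cited theorems, so conceptually you are on the right track and your dimension count is correct.

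However, the proof as written has a genuine gap, and you yourself flag it: the claim that every first order deformation of $I$ comes from a perturbation $M+tM'$ of the presenting matrix. For $n=2$ this is Schaps' theorem (equivalently Hilbert--Burch), but for arbitrary $n$ the assertion that a flat lift of the Eagon--Northcott complex is again Eagon--Northcott for some $M+tM'$ is precisely the nontrivial content of the results the paper cites. Your last sentence gestures at the argument (``lift the complex and show the lift is determined by a matrix perturbation'') but does not carry it out; what is needed is a computation showing that the relevant obstruction group, essentially $\mathrm{Ext}^1$ in the category of complexes or equivalently the normal module modulo the image of the infinitesimal $G$-action, vanishes in the appropriate degree. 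Without this, your upper bound on $\dim T_{[I]}\mathrm{Hilb}$ is unproven and the chain of inequalities does not close. A secondary point: your lower bound also uses that the fibre of $\Phi$ is exactly a $G$-orbit (i.e.\ that $M$ is recoverable from $I$ up to row and column operations), which is true for generic $M$ via the minimality of the Eagon--Northcott resolution, but deserves at least a reference rather than a one-line remark about ``linear syzygies''.

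In short: your outline is the right one and matches the literature the paper invokes, but the step you call ``the main obstacle'' is the entire substance of the proposition, and it remains unproved in your write-up.
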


\begin{proof}
This follows by Theorem 5.8 and Corollary 5.9 in \cite{kleppe2010deformations}. This explicit formula is also found in the corollary to the main theorem in \cite{faenzi2010hilbert}.
\end{proof}

We will show that $B_{nd}$ is a smooth point in the Hilbert scheme by showing that the tangent space has dimension equal the dimension of its component. By Lemma \ref{pushtoatleastonevertex} above, it is enough to count the dimensions of first order deformations in all vertices of $B_{nd}$. In order to count these first order deformations, we need a lemma:

\begin{Lemma}\label{vertexlemma}
Let $f_i = x_{i1}\cdots x_{id}$ denote the vertices in $B_{nd}$. If $f_1+tm$ is a deformation of $f_1$, then either $x_{2d}\cdot m$ is divisible by a generator of $B_{nd}$, or there is a $j$ such that $x_{1(d-j)}\cdots x_{1d}$ divides $m$ and $\frac{x_{2(d-j-1)}\cdots x_{2d}}{x_{1(d-j)}\cdots x_{1d}}\cdot m$ is divisible by a generator of $B_{nd}$. Similarly, if $f_n+tm$ is a deformation of $f_n$, then either $x_{(n-1)1}\cdot m$ is divisible by a generator of $B_{nd}$, or there is a $j$ such that $x_{n1}\cdots x_{nj}$ divides $m$ and $\frac{x_{(n-1)1}\cdots x_{(n-1)(j+1)}}{x_{n1}\cdots x_{nj}}\cdot m$ is divisible by a generator of $B_{nd}$.

If $1 < i < n$, and if $f_i+tm$ is a deformation of $f_i$, then $m$ is either divisible by $x_{i1}\cdots \widehat{x_{ij}}\cdots x_{id}$, or $m$ is
$$f_i\cdot \frac{x_{(i+1)j}}{x_{ij}}\cdot \frac{x_{(i-1)(j+1)}}{x_{i(j+1)}},$$
for one $1\le j < d$.

\end{Lemma}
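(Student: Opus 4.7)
Proof plan:

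The strategy is to iteratively apply Lemma \ref{pushdeformation} to the relations between $f_i$ and its adjacent generators in $B_{nd}$, using Lemma \ref{PushToVertex} to prevent the deformation from propagating from the vertex $f_i$ to a second vertex.

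For $i=1$ (and symmetrically $i=n$), I consider the push chain $f^{(k)} := x_{11}\cdots x_{1(d-k)}\cdot x_{2(d-k+1)}\cdots x_{2d}$ for $k=0,\dots,d$, so that $f^{(0)}=f_1$, $f^{(d)}=f_2$, and the relation $x_{2(d-k)}\cdot f^{(k)} - x_{1(d-k)}\cdot f^{(k+1)}=0$ lies in $B_{nd}$ for each $k$. Applying Lemma \ref{pushdeformation} inductively, at step $k$ either the vanishing condition $x_{2(d-k)}\cdot m^{(k)}\in B_{nd}$ holds, or $x_{1(d-k)}\mid m^{(k)}$ and the deformation pushes to $f^{(k+1)}$ with $m^{(k+1)} = m^{(k)}\cdot x_{2(d-k)}/x_{1(d-k)}$. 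Unwinding the recursion, $k$ successful pushes require $x_{1(d-k+1)}\cdots x_{1d}\mid m$. If the chain were to reach $f_2$, the deformation would visit two distinct vertices of $B_{nd}$, so Lemma \ref{PushToVertex} would force $m$ divisible by $f_1$, contradicting properness. Hence the chain stops at some step $k\le d-1$: either at $k=0$ (yielding the first conclusion $x_{2d}\cdot m\in B_{nd}$), or at $k\ge 1$ (yielding the second conclusion with $j=k-1$, since then $x_{1(d-j)}\cdots x_{1d}\mid m$ and the vanishing condition reads $x_{2(d-j-1)}/x_{1(d-j)}\cdots x_{1d}\cdot (x_{2(d-j)}\cdots x_{2d})\cdot m\in B_{nd}$).

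For $1<i<n$, I consider both a forward chain toward $f_{i+1}$ (changing positions $d,d-1,\dots$ from $i$ to $i+1$) and a backward chain toward $f_{i-1}$ (changing positions $1,2,\dots$ from $i$ to $i-1$), and let $k_+$ and $k_-$ denote the number of successful pushes in each direction. By Lemma \ref{PushToVertex}, since $f_i$ is itself a vertex, the deformation cannot be pushed to another vertex, so both $k_+<d$ and $k_-<d$. I argue by case analysis on $(k_+,k_-)$. If $k_+=k_-=0$, the joint conditions $x_{(i+1)d}\cdot m\in B_{nd}$ and $x_{(i-1)1}\cdot m\in B_{nd}$, combined with the non-decreasing index structure of generators of $B_{nd}$, force $m$ itself to be a generator of $B_{nd}$, so the deformation is trivial. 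If at least one of $k_+, k_-$ is positive, the combined divisibility constraints $x_{i1}\cdots x_{ik_-}\mid m$ and $x_{i(d-k_++1)}\cdots x_{id}\mid m$ together with the two vanishing conditions pin down $m$ to be either of the ``almost $f_i$'' form divisible by $x_{i1}\cdots\widehat{x_{ij}}\cdots x_{id}$ for some $j$, or of the exotic form $f_i\cdot x_{(i+1)j}/x_{ij}\cdot x_{(i-1)(j+1)}/x_{i(j+1)}$ with $j=k_-+1$ and $k_++k_-=d-2$.

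The main obstacle is the intermediate case: one must carefully verify how the forward and backward divisibility and vanishing conditions combine to yield exactly the two stated forms of $m$, showing in particular that any $m$ with both chains pushing strictly and then dying must have the precise two-position swap structure of the exotic form. The key combinatorial input is the non-decreasing structure of the index sequences in $B_{nd}$, which severely restricts which generators can divide the products appearing in the vanishing conditions at each stopping step.
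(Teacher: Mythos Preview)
Your proposal is correct and follows essentially the same approach as the paper: push the deformation along the chain of adjacent generators from $f_i$ toward the neighboring vertex (or toward both neighbors when $1<i<n$), and record where the chain terminates. The paper's proof is considerably terser---it simply asserts that the push toward $f_2$ must vanish at some step and that combining the two directional conditions in the intermediate case yields the stated dichotomy---whereas you have made the chain $f^{(k)}$, the stopping index, and the resulting case analysis on $(k_+,k_-)$ explicit.

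One minor remark: your invocation of Lemma~\ref{PushToVertex} to rule out the chain reaching $f_2$ is slightly indirect, since that lemma concerns pushing to \emph{two} distinct vertices while here $f_1$ is the starting point rather than a push target. The cleaner justification (which is really what the proof of Lemma~\ref{PushToVertex} establishes) is that reaching $f^{(d)}=f_2$ would require $x_{11},\dots,x_{1d}$ all to divide $m$, so $f_1\mid m$, contradicting properness. You effectively say this in the same sentence, so the argument is fine; just be aware that the lemma citation is more of a pointer to that divisibility argument than a direct application.
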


\begin{proof}
The first claim follows by trying to push the deformation from $f_1$ to $f_2$. Then at some point the deformation will vanish. By the construction of $B_{nd}$, $m$ will also vanish if we try to push $m$ to any other vertex generator $f_i$. The second claim is similar.

Finally, if $1<i<n$, then we must have that both the two following: \\
Either $x_{(i+1)n}\cdot m$ is divisible by a generator of $B_{nd}$, or there is a $j$ such that $x_{i(n-j)}\cdots x_{in}$ divides $m$ and $\frac{x_{(i+1)(n-j-1)}\cdots x_{(i+1)n}}{x_{i(n-j)}\cdots x_{in}}\cdot m$ is divisible by a generator of $B_{nd}$. And: \\
Either $x_{(i-1)1}\cdot m$ is divisible by a generator of $B_{nd}$, or there is a $j$ such that $x_{i1}\cdots x_{ij}$ divides $m$ and $\frac{x_{(i-1)1}\cdots x_{(i-1)(j+1)}}{x_{i1}\cdots x_{ij}}\cdot m$ is divisible by a generator of $B_{nd}$. 

If both of these holds, then $m$ is either divisible by $x_{i1}\cdots\widehat{x_{ij}}\cdots x_{in}$ for some $j$. Or on the form written in the lemma.
\end{proof}

\begin{Theorem}\label{boxpolsmooth}
The ideal $B_{nd}$ is smooth in the Hilbert scheme.
\end{Theorem}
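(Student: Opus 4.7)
The plan is to exploit the strategy already set up in the preceding propositions: show that the tangent space dimension equals the dimension of the component containing $B_{nd}$.

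First I would recall that by Proposition \ref{boxpolinit}, $B_{nd}$ is the initial ideal (under the lexicographic order specified there) of the ideal $I$ of maximal minors of the matrix $M_{nd}$. Since passing to an initial ideal corresponds to a flat degeneration, $B_{nd}$ and $I$ lie on the same irreducible component of the Hilbert scheme. By Proposition \ref{dimcomponent}, the dimension of this component is
$$N := d(d+n-1)nd - d^2 - (d+n-1)^2 + 1.$$
Hence it is enough to prove $\dim_k (\mathrm{Hom}_S(B_{nd}, S/B_{nd}))_0 \le N$; equality then forces smoothness, because the tangent space always contains the component's tangent space.

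By the Proposition in Section \ref{polarizationsanddeformations} identifying the tangent space with the vectors $\mathbf{v}_{I'}$, together with Lemma \ref{pushtoatleastonevertex}, every first order deformation of $B_{nd}$ can, after subtracting a suitable trivial piece, be identified uniquely with a deformation supported at exactly one vertex $f_i = x_{i1}\cdots x_{id}$. So I would enumerate, vertex by vertex, the monomials $m$ of degree $d$ such that $f_i + tm$ extends to a first order deformation of $B_{nd}$, using the precise characterization in Lemma \ref{vertexlemma}: for the boundary vertices $f_1$ and $f_n$, the allowed $m$ are those for which a specified monomial multiple lies in $B_{nd}$, and for an interior vertex $f_i$ with $1<i<n$, $m$ is either divisible by one of the "missing variable" monomials $x_{i1}\cdots \widehat{x_{ij}}\cdots x_{id}$, or is one of the exceptional monomials $f_i \cdot \tfrac{x_{(i+1)j}}{x_{ij}}\cdot \tfrac{x_{(i-1)(j+1)}}{x_{i(j+1)}}$.

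The main technical obstacle is the careful combinatorial bookkeeping. For each vertex I would: (a) count the monomials $m$ divisible by some $x_{i1}\cdots \widehat{x_{ij}}\cdots x_{id}$ but not divisible by a generator of $B_{nd}$ (an inclusion-exclusion over $j$), (b) add the exceptional contributions from Lemma \ref{vertexlemma}, and (c) at the boundary vertices $f_1$ and $f_n$, handle the asymmetric extra conditions separately. Summing these counts over $i=1,\dots,n$, and then verifying the resulting closed-form expression agrees with $N$, is a direct but tedious calculation; once this numerical match is established, the theorem follows since $\dim_k T_{[B_{nd}]}\mathrm{Hilb} = N$ forces $B_{nd}$ to be a smooth point on the component from Proposition \ref{boxpolinit}.
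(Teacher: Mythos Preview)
Your proposal is correct and follows essentially the same approach as the paper: reduce to vertex-by-vertex counting via Lemma \ref{pushtoatleastonevertex}, use Lemma \ref{vertexlemma} to characterize the admissible monomials at each vertex, and verify that the total matches the component dimension from Propositions \ref{boxpolinit} and \ref{dimcomponent}. The only cosmetic difference is that the paper organizes the boundary-vertex count by stratifying according to the index $j$ appearing in Lemma \ref{vertexlemma} rather than by an explicit inclusion--exclusion, but this is just a different bookkeeping of the same enumeration.
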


\begin{proof}
By Proposition \ref{boxpolinit} and \ref{dimcomponent} above, it is enough to show that the dimension of the tangent space of $B_{nd}$ in the Hilbert scheme is at most $d(d+n-1)nd-d^2-(d+n-1)^2+1$. By Lemma \ref{pushtoatleastonevertex} above, it is enough to calculate the dimension of the deformations of the vertex generators $f_i = x_{i1}\cdots x_{in}$. We start by calculating the dimension of deformations of the vertex generator $f_1$. For each of the cases in Lemma \ref{vertexlemma} above, we count how many different monomials $m$ are possible. 

If $x_{2d}\cdot m$ is divisible by a generator of $B_{nd}$, and $m$ is not, then $m$ has to be divisible by a monomial generator of the box polarization of $(x_1,x_2)^{d-1}$. Suppose that $m$ is not divisible by $x_{1d}$. We then count that there are
$$d(d-1)n-(d-1)$$
possible $m$ of degree $d$ satifying this which are not generators of $B_{nd}$. This is because there are $d$ monomial generators of the box polarization of $(x_1,x_2)^{d-1}$, and each of them can be multiplied by a variable $x_{ab}$ where $b<d$ to produce a monomial of degree $d$ which is not a generator of $B_{nd}$. This can therefore be done in $d(d-1)n$ ways. However, for each linear relation between the monomial generators of the box polarization of $(x_1,x_2)^{d-1}$, we see that the corresponding monomial of degree $d$ can be made in two different ways. Since there are $d-1$ such relations, we need to subtract $d-1$ as we have counted these monomial twice.

For each $j$ in the range $1<j\le d$, we now assume that $x_{1j}\cdots x_{1d}$ divides $m$, and that $\frac{x_{2(j-1)}\cdots x_{2d}}{x_{1j}\cdots x_{1d}} \cdot m$ is divisible by a generator of $B_{nd}$. So we have that $m$ has to be divisible by a monomial generator of the box polarization of $(x_1,x_2)^{j-2}$. If we suppose that $x_{1(j-1)}$ does not divide the monomial $m$, we calculate that there are
$$(j-1)(d-1)n-(j-2)+(j-1)(n-1)$$
monomials satisfying this property without being a generator of $B_{nd}$. The first part is done exactly as in the case above, and the extra part $(j-1)(n-1)$ comes from the $(n-1)$ variables $x_{t(j-1)}$ for $t=2,3,\dots,n$ which multiplied by the degree $d-1$-monomial we must have, in all $j-1$ cases, also gives possible a $m$ of degree $d$ not divisible by a generator. So, by summing up we get that the dimension of first order deformations of $f_1$ is at most
$$
\begin{array}{cl}
&\sum\limits_{i=1}^{d}i(d-1)n - \sum \limits_{i=1}^{d-1} i + \sum \limits_{i=1}^{d-1} i(n-1) \\
& \\
= & \frac{d(d-1)}{2}\left(nd+2n-2\right)
\end{array}
$$

By symmetry, we can also compute that the number of first order deformations of $f_n$ is the same in precisely the same way.

Finally, we want to compute the number of possible first order deformations of $f_i$ when $1<i<n$. So suppose that $m$ is divisible by $x_{i2}\cdots x_{id}$. Then there are
$$(d-1)n + (n-i)$$
monomials satisfying this. Here the $(d-1)n$ part comes from variables $x_{ab}$ with $b\neq 1$, and the $(n-1)$ part comes from variables $x_{a1}$ with $a>i$.
If $m$ is divisible by $x_{i1}\cdots x_{i(d-1)}$, then there are
$$(d-1)n+(i-1)$$
monomials satisfying this. And if $m$ is divisible by $x_{i1}\cdots\widehat{x_{ij}}\cdots x_{id}$, then there are
$$(d-1)n+(n-1)$$
monomials satisfying this. Otherwise, there is exactly $(d-1)$ monomials which are deformations of $f_i$.

So by summing up, we get that the dimension of the first order deformations of $f_i$ is at most
$$
\begin{array}{cl}
&\sum\limits_{i=1}^{d}(d-1)n - \sum \limits_{i=1}^{d-2} (n-1) + (n-i)+(i-1) + (d-1)\\
& \\
= & n(d+1)(d-1)
\end{array}
$$

So the dimension of the first order deformation of $B_{nd}$ is therefore at most
$$
\begin{array}{cl}
& 2\cdot \frac{d(d-1)}{2}\left(nd+2n-2\right) + (n-2)n(d+1)(d-1) \\
& \\
= & d(d+n-1)nd-d^2-(d+n-1)^2+1
\end{array}  
$$

This means that the dimension of the tangent space of $B_{nd}$ in the Hilbert scheme is at most the same as the dimension of the component it lies in. Hence, it is a smooth point.

\end{proof}

\section{Standard Polarization}\label{standardpolarization}

We will now show that the standard polarization $P_{nd}$ is a smooth point on the Hilbert scheme. This will be done by finding all first order deformations of $P_{nd}$. Then we show that all first order deformations can be lifted to global deformations.

Consider a first order deformation of the standard polarization $P_{nd}$. We want to descibe how the first order deformations of the vertices in $P_{nd}$ are built up.

\begin{Lemma}\label{stdpollemma}
Let $f_1 = x_{11}\cdots x_{1d}$, and let $f_1+tm$ be a first order deformation of $f_1$. Assume that $x_{1(j+1)}\cdots x_{1d}$ divides $m$ and $x_{1j}$ does not. If $n\ge 4$, or if $n=3$ and $j<d$ then $m$ is divisible by $x_{11}\cdots \widehat{x_{1j}}\cdots x_{1d}$. If $n=3$ and $j=d$ then either $m$ is divisible by $x_{11}\cdots x_{1(d-1)}$ or $m$ is divisible by $x_{11}\cdots x_{1(d-2)}x_{22}x_{32}$.
\end{Lemma}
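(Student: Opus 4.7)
The plan is to apply Lemma~\ref{pushdeformation} to the Koszul relations $R_p : x_{p1}f_1 - x_{1d}g_p$ between $f_1$ and each of its neighbors $g_p = x_{11}\cdots x_{1(d-1)}x_{p1}$ for $p = 2,\dots,n$. Each such relation forces the dichotomy: either (a$_p$) $x_{p1}m \in P_{nd}$, or (b$_p$) $x_{1d}\mid m$ together with a companion deformation $g_p + t\,mx_{p1}/x_{1d}$ of $g_p$. The key combinatorial tool is the statistic $a_q(m) := \max\{\ell \ge 0 : x_{q1}\cdots x_{q\ell}\mid m\}$, characterized by $m \in P_{nd} \iff \sum_q a_q(m) \ge d$. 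Setting $k := a_1(m) \le j-1$ (from $x_{1j}\nmid m$) and using $m\notin P_{nd}$, we have $\sum_q a_q(m) < d$.

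In the case $j = d$, the push (b$_p$) is unavailable since $x_{1d}\nmid m$, so (a$_p$) must hold for every $p \ge 2$. This forces $x_{p1}\nmid m$ for all such $p$ (else $a_p$ does not strictly increase under multiplication by $x_{p1}$ and $m$ would already lie in $P_{nd}$) and then $x_{p2},\dots,x_{p(d-k)}\mid m$. A degree count yields $k + (n-1)(d-k-1) \le d$, i.e.\ $(n-2)(d-k) \le n-1$. For $n \ge 4$ this forces $k = d-1$, giving $x_{11}\cdots x_{1(d-1)}\mid m$, as required. For $n = 3$ it additionally admits $k = d-2$; in that case the constraints pin down $x_{22},x_{32}\mid m$ and, by the degree bound, $m = x_{11}\cdots x_{1(d-2)}x_{22}x_{32}$, matching the exceptional conclusion.

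When $j < d$, the analogous count, now augmented by $x_{1(j+1)}\cdots x_{1d}\mid m$, becomes $k + (d-j) + (n-1)(d-k-1) \le d$, i.e.\ $(n-2)(d-k) \le n-1 - (d-j) \le n-2$, whence $k \ge d-1$, contradicting $k \le j-1 \le d-2$. Hence some (b$_{p^*}$) must hold, producing a companion deformation $m_{p^*} = mx_{p^*1}/x_{1d}$ of $g_{p^*}$. I then iterate Lemma~\ref{pushdeformation} at $g_{p^*}$ using its remaining Koszul relations $x_{p^*2}g_{p^*} - x_{1(d-1)}h_{p^*}$ and $x_{q1}g_{p^*} - x_{1(d-1)}g_{p^*,q}$ for $q\ne 1,p^*$, whose push branches require $x_{1(d-1)}\mid m$. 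Propagating along the chain $f_1 \to g_{p^*} \to h_{p^*}^{(1)} \to \cdots \to h_{p^*}^{(d-j)}$, the push at step $s$ requires $x_{1(d-s+1)}\mid m$ and is therefore blocked exactly at step $d-j+1$ by $x_{1j}\nmid m$. The resulting absorption conditions at $h_{p^*}^{(d-j)}$, unfolded through the same $a_q$-count as in the first case, deliver the divisibilities $x_{11},\dots,x_{1(j-1)}\mid m$, giving $x_{11}\cdots\widehat{x_{1j}}\cdots x_{1d}\mid m$.

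The main obstacle is the bookkeeping in Case $j < d$: at each intermediate $h_{p^*}^{(s)}$ one must track not only the forward push but also the sibling push/absorb alternatives coming from the Koszul relations to generators obtained by transferring a unit from position $1$ or from position $p^*$ to some other position, and feed the resulting monomial constraints back through the chain into divisibility statements about $m$ itself. The exceptional outcome for $n = 3$, $j = d$ reflects precisely the one-unit slack in the inequality $(n-2)(d-k) \le n-1$ when $n = 3$; for $n \ge 4$ or $j < d$ this inequality is tight enough to eliminate that slack and enforce the single-swap structure asserted in the lemma.
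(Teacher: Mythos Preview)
Your treatment of the case $j=d$ is correct and is essentially the paper's argument, recast cleanly through the statistic $a_q(m)$; the inequality $(n-2)(d-k)\le n-1$ and its analysis for $n\ge 4$ versus $n=3$ match the paper's degree count exactly.

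For $j<d$, however, your argument diverges from the paper's and remains a sketch with a real gap. Your contradiction argument correctly shows that not all $(a_p)$ can hold, so some push $(b_{p^*})$ is forced; but after that you only control one direction $p^*$. The subsequent iteration along the chain $f_1\to g_{p^*}\to h_{p^*}^{(1)}\to\cdots$ requires that at each intermediate step absorption again fails (so that the companion monomial genuinely propagates), and you do not establish this. Your final appeal to ``the same $a_q$-count'' at $h_{p^*}^{(d-j)}$ presupposes you have arrived there and have the full set of sibling absorption constraints in hand; as you yourself note, this bookkeeping is the crux, and it is not carried out. A single forward constraint $m\cdot x_{p^*1}\cdots x_{p^*(d-j+1)}\in P_{nd}$ is not enough to force $k=j-1$: one also needs the constraints coming from the other directions.

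The paper sidesteps the iteration entirely by working with \emph{long} relations of the form
\[
x_{p1}\cdots x_{p(d-j+1)}\cdot f_1 \;-\; x_{1j}\cdots x_{1d}\cdot\bigl(x_{11}\cdots x_{1(j-1)}x_{p1}\cdots x_{p(d-j+1)}\bigr),
\]
for which the ``push'' branch is blocked in one stroke by $x_{1j}\nmid m$, forcing $m\cdot x_{p1}\cdots x_{p(d-j+1)}\in P_{nd}$ for \emph{every} $p\ge 2$. Using this for $p=2$ and $p=3$ simultaneously yields the dichotomy, and two further \emph{mixed} long relations (push $d-j$ steps toward $f_2$ then one toward $f_3$, and vice versa) supply the extra constraints $m\cdot x_{21}\cdots x_{2(d-j)}x_{31}\in P_{nd}$ and $m\cdot x_{31}\cdots x_{3(d-j)}x_{21}\in P_{nd}$ that kill the exceptional branch by a degree count. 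This two-directions-plus-mixed-paths device is exactly what your single-chain iteration would have to reproduce via the sibling relations, but it is never made explicit in your argument.
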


\begin{proof}
First, we will show the result when $n\ge 4$ and $j=d$. So assume that $x_{1d}$ does not divide $m$. Then, by Lemma \ref{pushdeformation}, we have that $x_{j1}m$ are all divisible by a generator of $P_{nd}$ for $2\le j\le n$. But since $m$ is not divisible by a generator, this means that none of the variables $x_{j1}$ divides $m$. This can only happen if $m$ is divisible by $x_{11}\cdots x_{1(d-1)}$. The only other possibility would be that $m$ is divisible by $x_{11}\cdots x_{1d'}x_{22}\cdots x_{2d_2} \cdots x_{n2}\cdots x_{nd_n}$, where $d_j\ge 2$ and $d'+d_j\ge d$ for all $j=2,\dots,n$. On the other hand, we must also have that $d'+(d_2-1)+(d_3-1)+\cdots+(d_n-1) \le d.$ So this means in particular that
$$d'+(d_2-1)+(d_3-1)+\cdots+(d_n-1) \le d\le d'+d_2,$$ 
which means that 
$$-1+(d_3-1)+(d_4-1)+\cdots+(d_n-1)\le 0.$$
But if $n\ge 4$, the left hand side is positive since $d_3\ge 2$ and $d_4\ge 2$, so we may exclude this case. 

Next, we show the case where $n=3$ and $j=d$. Suppose that $x_{1d}$ does not divide $m$. That means that $mx_{21}$ and $mx_{31}$ are both divisible by generators of $P_{3d}$, while $m$ is not. Exactly similar as above, we see that two cases given in the lemma are the only possibilities.

Finally, we assume that $n\ge 3$ and $j<d$. So we have that $x_{1j}$ does not divide $m$ while $x_{1(j+1)}\cdots x_{1d}$ does. By pushing the deformation towards the vertex $f_2$, we get that $mx_{21}\cdots x_{2(d-j+1)}$ is divisible by a generator of $P_{nd}$ and by pushing the deformation towards the vertex $f_3$, we get that $mx_{31}\cdots x_{3(d-j+1)}$ is divisible by a generator of $P_{nd}$. Again, we get two possible cases. Either $m$ is divisible by $x_{11}\cdots x_{1(j-1)}$, and we are done. Or, we would have that $m$ is divisible by $x_{11}\cdots x_{1(j-2)}x_{2(d-j+2)}x_{3(d-j+2)}$. However, we can see that this last case is not possible by trying to push the deformation first towards the vertex $f_2$, then towards the vertex $f_3$. By doing this, we must also have that $mx_{21}\cdots x_{2(d-j)}x_{31}$ is divisible by a generator of $P_{nd}$. And by pushing the deformation first towards the vertex $f_3$ then towards the vertex $f_2$, we must have that $mx_{31}\cdots x_{3(d-j)}x_{21}$ is divisible by a generator of $P_{nd}$. These two new conditions means that $m$ also must be divisible by $x_{22}x_{32}$. However, adding all the conditions together would mean that $m$ has degree at least $d+1$, but this is a contradiction, since we assumed $m$ to be of degree $d$.

\end{proof}

We can now show that the standard polarization is a smooth point in the Hilbert scheme of $M_d$. This is because for most cases, the only deformations of $P_{nd}$ are deformations of the variables. For the special cases where we have some other first order deformation, we will show explicitely that they lift to global deformations.

\begin{Theorem}
The ideal $P_{nd}$ is smooth in the Hilbert scheme of $M_d$.
\end{Theorem}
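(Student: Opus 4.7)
The plan is to classify all first order deformations of $P_{nd}$ by combining Lemmas \ref{pushtoatleastonevertex} and \ref{stdpollemma}, and then to exhibit, for each of them, an explicit lift to a flat family over $k[t]$. This will show that every first order deformation extends to all higher orders, so the deformation functor at $P_{nd}$ is unobstructed and $P_{nd}$ is a smooth point of the Hilbert scheme.

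First I would dispose of the boundary case $n=2$: here $P_{2d}=B_{2d}$ by inspection, and smoothness is Theorem \ref{boxpolsmooth}. So assume $n\ge 3$. By Lemma \ref{pushtoatleastonevertex} the space of monomial first order deformations of $P_{nd}$ is spanned by deformations of the vertex generators $f_i=x_{i1}\cdots x_{id}$, and by the symmetric role of the $n$ first indices it is enough to describe deformations of $f_1$, which Lemma \ref{stdpollemma} does.

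For $n\ge 4$, Lemma \ref{stdpollemma} tells us that every monomial $m$ occurring in a first order deformation $f_1+tm$ has the form $m=(f_1/x_{1j})\cdot y$ for some $1\le j\le d$ and some variable $y\ne x_{1j}$. Rewriting $f_1+tm=x_{11}\cdots(x_{1j}+ty)\cdots x_{1d}$, this deformation is the image of $f_1$ under the $k[t]$-algebra substitution $\sigma\colon x_{1j}\mapsto x_{1j}+ty$ (all other variables fixed), which is an automorphism of $\widetilde S[t]$ (its inverse sends $x_{1j}\mapsto x_{1j}-ty$). Hence $\sigma(P_{nd}\widetilde S[t])$ is a flat family over $k[t]$ realizing the given tangent direction. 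Running this over all vertices and all relevant pairs $(j,y)$ supplies a global lift of every first order deformation of $P_{nd}$ when $n\ge 4$.

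For $n=3$ the same argument handles all variable-change first order deformations, but Lemma \ref{stdpollemma} additionally exhibits, at each of the three vertices, one extra monomial $m^{\ast}$ of non-variable-change type; at $f_1$ this is $m^{\ast}=x_{11}\cdots x_{1(d-2)}x_{22}x_{32}$, and the other two exceptional deformations are obtained by cyclically permuting the first index $1,2,3$. To lift the exceptional deformation at $f_1$, I plan to take the family $J(t)\subset\widetilde S[t]$ obtained from $P_{nd}$ by replacing the single generator $f_1$ with $f_1+t\,m^{\ast}$ and leaving every other generator untouched, and to check flatness of $\widetilde S[t]/J(t)$ over $k[t]$ by verifying the hypothesis of Lemma \ref{liftrelations}: every minimal relation of $P_{nd}$ involving $f_1$ must lift to a relation of $J(t)$. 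The key combinatorial input that should make this work is that $m^{\ast}$ is divisible by the existing generators $x_{11}\cdots x_{1(d-2)}x_{22}$ and $x_{11}\cdots x_{1(d-2)}x_{32}$ of $P_{nd}$, so that every extra $t$-term arising from multiplying $f_1+t\,m^{\ast}$ by a variable already lies in $P_{nd}$ and can be absorbed. The main obstacle is precisely this Taylor-relation verification, which must be carried out uniformly across the several combinatorial types of relations involving $f_1$ (relations with another vertex, relations with non-vertex generators, and relations within the first row); the check is repetitive but conceptually straightforward once the divisibility observation above is made. Once the three exceptional one-parameter lifts are in hand and are combined with the variable-change lifts, every first order deformation of $P_{nd}$ is realized by a flat family over $k[t]$, and $P_{nd}$ is a smooth point of the Hilbert scheme of $M_d$ also when $n=3$.
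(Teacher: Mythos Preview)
Your overall architecture matches the paper's, but there are two genuine problems.

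First, your ``key combinatorial input'' is misstated. The monomials $x_{11}\cdots x_{1(d-2)}x_{22}$ and $x_{11}\cdots x_{1(d-2)}x_{32}$ have degree $d-1$ and are \emph{not} generators of $P_{nd}$; indeed if $m^{\ast}$ itself lay in $P_{nd}$ the deformation would be trivial. The correct fact, and the one the paper uses, is that $x_{21}m^{\ast}$ and $x_{31}m^{\ast}$ are multiples of the generators $x_{11}\cdots x_{1(d-2)}x_{21}x_{22}$ and $x_{11}\cdots x_{1(d-2)}x_{31}x_{32}$ of $P_{3d}$; this is what makes the two linear syzygies at $f_1$ lift.

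Second, and more seriously, the inference ``every basis tangent vector extends to a one-parameter flat family, hence the point is smooth'' is invalid: the node $\operatorname{Spec}k[x,y]/(xy)$ already has two arcs spanning its tangent space. What is needed is a \emph{simultaneous} flat family whose tangent map is surjective. For $n\ge 4$ this is implicit in your argument (variable changes form a group). For $n=3$ and $d\ge 3$ it is an easy upgrade: the three exceptional lifts touch disjoint vertex generators and the absorbing generators used are non-vertices, so one may take independent parameters $t_1,t_2,t_3$. But for $n=3$, $d=2$ this fails: the absorbing generators are precisely the other two vertices, so the three lifts interact at second order. The paper handles this case by an explicit $3$-parameter family with correction terms such as $x_{11}x_{21}-t_1t_2x_{32}^2$; your proposal omits this case entirely.

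A minor point: $P_{2d}\neq B_{2d}$ as ideals in $\widetilde S$; they differ by the relabelling $x_{2j}\mapsto x_{2,\,d-j+1}$, so ``by inspection'' should be replaced by this isomorphism.
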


\begin{proof}
We wish to find all first order deformations of the ideal $P_{nd}$. By Lemma \ref{pushtoatleastonevertex}, it is enough to find the first order deformation in the vertices of $P_{nd}$. By the symmetry of the standard polarization, the deformations of the vertex $f_i$ can be found similar as for $f_1$.

If $n\ge 4$, then by Lemma \ref{stdpollemma} above, we see that all deformations $f_i+tm$ have $m$ divisible by $x_{i1}\cdots \widehat{x_{ij}}\cdots x_{id}$. But such a deformation is just a deformation of the variable $x_{ij} \mapsto x_{ij}+tx_{i'j'}$. Since these are all first order deformations of $P_{nd}$, and since it is well known that these deformations lift to global deformations, we get that $P_{nd}$ is smooth in the Hilbert scheme.

If $n=3$, then by Lemma \ref{stdpollemma} above, we get that the deformations $f_i+t_im_i$ are either just deformations of the variables as above, or we must have that $m_1 = x_{11}\cdots x_{1(d-2)}x_{22}x_{32}$, $m_2 = x_{21}\cdots x_{2(d-2)}x_{12}x_{32}$ and $m_3=x_{31}\cdots x_{3(d-2)}x_{12}x_{22}$.

Since the deformations of the variables lift to global deformations, it is enough to verify that these three first order deformations also lift to global deformations.

We show this by lifting the deformation $x_{11}\cdots x_{1d}+tx_{11}\cdots x_{1(d-2)}x_{22}x_{32}$. The relations in the ideal $P_{nd}$  are $x_{11}\cdots x_{1d}\cdot x_{21} - x_{11}\cdots x_{1(d-1)}x_{21}\cdot x_{1d}$ and $x_{11}\cdots x_{1d}\cdot x_{31} - x_{11}\cdots x_{1(d-1)}x_{31}\cdot x_{1d}$. If $d\ge 3$ we see that these relations lifts to the relations
$$
\begin{array}{rl}
&(x_{11}\cdots x_{1d}+tx_{11}\cdots x_{1(d-2)}x_{22}x_{32})\cdot x_{21} \\ 
- & (x_{11}\cdots x_{1(d-1)}x_{21})\cdot x_{1d} \\
- & (x_{11}\cdots x_{1(d-2)}x_{21}x_{22})\cdot tx_{32}
\end{array}
$$ and
$$
\begin{array}{rl}
&(x_{11}\cdots x_{1d}+tx_{11}\cdots x_{1(d-2)}x_{22}x_{32})\cdot x_{31} \\
- & (x_{11}\cdots x_{1(d-1)}x_{31})\cdot x_{1d} \\
- &(x_{11}\cdots x_{1(d-2)}x_{31}x_{32})\cdot tx_{22}.
\end{array}
$$
These relations contains only one vertex $x_{11}\cdots x_{1d}$. We can therefore lift all the relations in the ideal $I$, where $x_{11}\cdots x_{1d}+t_1x_{11}\cdots x_{1(d-2)}x_{22}x_{32}$, $x_{21}\cdots x_{2d}+t_2x_{21}\cdots x_{2(d-2)}x_{12}x_{32}$ and $x_{31}\cdots x_{3d}+t_3x_{31}\cdots x_{3(d-2)}x_{12}x_{22}$ are the deformations of the vertices, simultaneously.
 
The special case where $n=3$ and $d=2$ can also be liftet to global deformation. It is straight forward to verify that the relations of the ideal $P_{3d}$ can be lifted to relations of the ideal

\begin{align*}
(x_1x_2+t_1y_2z_2,\, & x_1y_1-t_1t_2z_2^2,\, x_1z_1-t_1t_3y_2^2, \\
& y_1y_2+t_2x_2z_2,\, y_1z_1-t_2t_3x_2^2,\, z_1z_2+t_3x_2y_2).
\end{align*}

Finally, the case where $n=2$. But here it is easy to see that $P_{2d}$ is isomorphic to the box polarization $B_{2d}$ via the isomorphism sending $x_{1i}$ to $x_{1i}$ and $x_{2j}$ to $x_{2,d-j+1}$. Since we already know that $B_{2d}$ is smooth, it follows that $P_{2d}$ is smooth.

\end{proof}

\section{The ideal $m_\mathrm{sq.fr.}^d$}\label{kvadratfriversjon}
The main purpose of polarizing monomial ideals is to find a square-free monomial ideal with the same properties, i.e. same Betti numbers and Betti table, etc., and use theory from combinatorial commutative algebra to find the resolution, Betti numbers, etc. However, the polarized ideal has many variables which sometimes makes this hard. It may therefore be useful to find a square-free monomial ideal with as few variables as possible, which has the same properties, i.e. Betti numbers, etc. For the ideal $m^d$, we can produce a square-free monomial ideal which we denote by $m_\mathrm{sq.fr.}^d$ with this property.

\begin{Def}   
Let $k[x_1,\dots,x_n]$ be the polynomial ring in $n$ variables, and let $m^d$ be the $d$-th power of the maximal ideal. Then we define
$$m_\mathrm{sq.fr.}^d := \left(x_{i_1}\cdots x_{i_d}\,|\, 1\le i_1<i_2<\cdots <i_d\le n+d-1\right),$$
to be the $d$-th square-free power of the maximal ideal in the polynomial ring $k[x_1,\dots,x_{n+d-1}]$.
\end{Def}

\begin{Remark}
The fact that these monomial ideals have the same Betti numbers is because they can both be polarized by the box polarization. This is used in e.g. \cite{NR2009Betti} to produce a minimal free resolution of the ideal $m^d$. 
\end{Remark}
\medskip

It is also interesting to study different polarizations of the ideal $I_d=m_\mathrm{sq.fr.}^d$. This ideal have generally many more polarizations than the ideal $m^d$, and the combinatorics describing these polarizations are studied in \cite{lohne2013polarizations}. In this setting, and when $d\ge 3$, we don't have such a nice choice of polynomial ring where all the polarizations lie. In fact, if we study the so-called maximal polarizations, i.e. polarizations which have no non-trivial polarizations themself, it is not clear which polynomial rings we should use to include all polarization. It is therefore quite difficult to study the deformations of different polarizations in the same way as we did for $m^d$. However, when $d=2$, then all maximal polarizations lie in the same polynomial ring. In this case, we also have that every maximal polarization of $m^2$ is also a maximal polarization of $m_{\mathrm{sq.fr}^2}$ (see \cite[Proposition 4.7]{lohne2013polarizations}). We will study the deformations of the ideals in this case. The combinatorics describing the maximal polarizations are especially nice when $d=2$. We will use this description to calculate the dimension of the tangent space of each polarization in this case.

Let $n'\ge 3$ be an integer, and consider the ideal $m_\textrm{sq.fr.}^d$ consisting of all square-free monomials of degree $2$ in $k[x_1,\dots, x_{n'}]$. Then the following result is obtained in \cite{lohne2013polarizations}.

\begin{Theorem}\label{treetheorem}
Every spanning tree $T$ of $K_{n'}$, i.e. the complete graph on $n'$ vertices, corresponds to a maximal polarization of $m_\textrm{sq.fr.}^2$ in the following way. We name the edges in $T$ by $e_t$ for $t=1,\dots,n'-1$. Then $I_T$ is obtained as follows
$$I_T = \left(x_{ia}x_{jb}\,|\, i<j\text{ and the path from } i \text{ to } j \text{ starts in } e_a \text{ and ends in }e_b\right)$$
Furthermore, every maximal polarization of $m_\textrm{sq.fr.}^2$ comes from such a tree.
\end{Theorem}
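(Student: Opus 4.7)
The plan is to construct $I_T$ from the tree data, verify it is a polarization of $m_\mathrm{sq.fr.}^2$ in the sense of Definition \ref{polarization}, and then argue both maximality and the converse. For each vertex $i\in\{1,\dots,n'\}$, let $E_i$ denote the set of indices of edges of $T$ incident to $i$, and work in the polynomial ring $\widetilde{S} = k[x_{ia}\mid i\in\{1,\dots,n'\},\,a\in E_i]$. The depolarization map $\varphi$ sending every $x_{ia}$ to $x_i$ carries each generator $x_{ia}x_{jb}$ of $I_T$ to $x_ix_j$, so $\varphi(I_T)=m_\mathrm{sq.fr.}^2$ on the nose. The length of the sequence $\sigma$ of differences from Definition \ref{polarization} equals $\sum_i(|E_i|-1) = 2(n'-1)-n' = n'-2$, matching the codimension difference between $\widetilde{S}$ and $k[x_1,\dots,x_{n'}]$.

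The substantive analytic step is showing that $\sigma$ is a regular $\widetilde{S}/I_T$-sequence. The natural route is via Stanley--Reisner theory: since $I_T$ is a squarefree quadratic monomial ideal, it is the edge ideal of a graph $H_T$ on the vertex set $\{x_{ia}\}$, whose independence complex $\Delta_T$ I would prove is shellable by fixing a root of $T$ and ordering facets according to the lexicographic ordering of root-first paths. Once $\widetilde{S}/I_T$ is known to be Cohen--Macaulay, the fact that $\sigma$ consists of $n'-2$ linear forms cutting the Krull dimension down to exactly $\dim k[x_1,\dots,x_{n'}]/m_\mathrm{sq.fr.}^2$ forces it to be a regular sequence.

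For maximality, I would argue contrapositively. Suppose some $x_{ia}$ could be split further into two new variables. The occurrences of $x_{ia}$ among the generators of $I_T$ are indexed exactly by the vertices $j$ lying in the component of $T\setminus\{i\}$ reached through $e_a$, and any nontrivial splitting would partition these into two nonempty blocks. Using the uniqueness of paths in $T$, I would construct an explicit syzygy among generators whose $x_{ia}$-occurrences straddle the two blocks and verify that this syzygy cannot lift under the attempted splitting, contradicting Lemma \ref{liftrelations}.

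For the converse, start with any maximal polarization $J$ of $m_\mathrm{sq.fr.}^2$ and define a graph $G_J$ on $\{1,\dots,n'\}$ where edges $\{i,j\}$ and $\{i,k\}$ of $K_{n'}$ are ``glued through $i$'' whenever the corresponding generators of $J$ share a polarized variable in the $x_i$-slot. One then needs to show (a) $G_J$ is connected --- otherwise $J$ would split as a polarization of a proper sub-ideal, impossible since $m_\mathrm{sq.fr.}^2$ links all variables --- and (b) $G_J$ is acyclic --- otherwise one could consistently refine $J$ by splitting variables around a cycle, contradicting maximality. Reading off the edge labels from $G_J$ then recovers $I_T$ for $T=G_J$. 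The hardest part of the entire argument is (b): ruling out cycles is precisely where the word ``maximal'' carries its weight, and it is the step where the finer combinatorial machinery developed in \cite{lohne2013polarizations} must enter.
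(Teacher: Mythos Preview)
The paper does not actually prove this theorem: it is stated with the preface ``the following result is obtained in \cite{lohne2013polarizations}'' and no proof is given here. So there is no in-paper argument to compare your proposal against; the correct comparison target is the cited companion paper.

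As for the proposal on its own merits, the forward direction is a reasonable outline. Verifying $\varphi(I_T)=m_\mathrm{sq.fr.}^2$ and the variable count is straightforward, and reducing regularity of $\sigma$ to Cohen--Macaulayness of $\widetilde{S}/I_T$ is the right move; you would still owe an actual shelling, but the idea of rooting $T$ and ordering facets along root-paths is workable. The maximality argument via Lemma~\ref{liftrelations} is also on the right track.

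The converse, however, has a genuine gap and a definitional muddle. Your graph $G_J$ is declared to have vertex set $\{1,\dots,n'\}$, yet the data you describe (``edges $\{i,j\}$ and $\{i,k\}$ glued through $i$ when they share a polarized $x_i$-variable'') is an equivalence relation on the edges of $K_{n'}$ incident to each vertex, not an edge set on $\{1,\dots,n'\}$. What one really obtains from a polarization is, for each vertex $i$, a partition of its neighbours into blocks (one block per polarized copy of $x_i$); the tree $T$ has to be extracted from this system of local partitions, and that extraction is the nontrivial combinatorial content. Your claim (a) about connectedness does not follow from ``$m_\mathrm{sq.fr.}^2$ links all variables'' in any direct way, and you yourself concede that (b), acyclicity, is where the real work lies and defer it to \cite{lohne2013polarizations}. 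Since that is exactly what this paper does too, your proposal is not so much an alternative proof as a sketch that ultimately rests on the same external reference.
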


\begin{Example}\label{trepolarisering}
Let $T$ be the spanning tree
\begin{center}
\begin{picture}(150,60)
\put(10,12){\circle*{2}}
\put(7,0){$1$}
\put(25,17){$e_1$}
\put(10,12){\line(1,0){40}}
\put(50,12){\circle*{2}}
\put(47,0){$2$}
\put(65,17){$e_2$}
\put(50,12){\line(1,0){40}}
\put(90,12){\circle*{2}}
\put(87,0){$3$}
\put(90,12){\line(0,1){40}}
\put(90,52){\circle*{2}}
\put(94,48){$4$}
\put(94,32){$e_3$}
\put(130,12){\circle*{2}}
\put(105,17){$e_{4}$}
\put(90,12){\line(1,0){40}}
\put(127,0){$5$}
\end{picture}
\end{center}
of the complete graph on $5$ vertices. Then

\begin{align*}
I_T = (x_{11}x_{21}, x_{11}x_{32}, & x_{11}x_{43},x_{11}x_{54}, x_{22}x_{32}, \\
& x_{22}x_{43}, x_{22}x_{54}, x_{33}x_{43}, x_{34}x_{54}, x_{43}x_{54}). 
\end{align*}

\end{Example}

In the case of deformations of polarizations of the maximal ideal, we showed that it is enough to calculate the number of different deformations of the generators that corresponded to the vertices of $m^d$. In the case of $m_\textrm{sq.fr.}^2$ we no longer have these vertices and the situation seems to be more difficult. However, the problem is solved by the using the description of the polarization given in Theorem \ref{treetheorem} above.

\begin{Def}
Suppose that $I_T$ is a polarization of $m_\textrm{sq.fr.}^2$, corresponding to the tree $T$, and suppose that $T$ has the edges $e_t$, for $t=1,\dots, n'-1$. Then the monomials $f_t = x_{it}x_{jt}$, where $e_t = (i,j)$ are called the vertices of $I_T$.
\end{Def}  

\begin{Lemma}
Let $I_T$ be a polarization of $m_\textrm{sq.fr.}^2$ corresponding to the tree $T$. Then every first order deformation of a generator of $I_T$ can be pushed to exactly one vertex.
\end{Lemma}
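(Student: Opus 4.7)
The plan is to adapt the proof of Lemma~\ref{pushtoatleastonevertex}, splitting the claim into an ``at most one'' and an ``at least one'' half. Fix a generator $f = x_{ia}x_{jb}$ of $I_T$ with $i<j$, and write the unique $i$-to-$j$ path in $T$ as $v_0 = i, v_1, \ldots, v_k = j$ through edges $e_{a_1} = e_a, \ldots, e_{a_k} = e_b$. If $k = 1$ then $a = b$ and $f$ is itself the unique vertex, so assume $k \ge 2$. Walking the path inside $I_T$ yields two natural pushing sequences of generators, one collapsing the $j$-end of the path to terminate at $f_a$ and one collapsing the $i$-end to terminate at $f_b$. The first step toward $f_a$ uses the syzygy $x_{v_{k-1}, a_{k-1}} f - x_{jb} \cdot x_{ia} x_{v_{k-1}, a_{k-1}} = 0$ and, by Lemma~\ref{pushdeformation}, requires $x_{jb}\mid m$; symmetrically the first step toward $f_b$ requires $x_{ia}\mid m$. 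Later steps impose no new condition, since the updated deformation monomial is automatically divisible by the next variable to be removed.

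For the ``at most one'' half, if both $x_{ia}\mid m$ and $x_{jb}\mid m$ held then $f = x_{ia}x_{jb}$ would divide the degree-$2$ monomial $m$, placing $m\in I_T$ and contradicting properness of the deformation.

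For the ``at least one'' half, I argue by contradiction, assuming $x_{ia}, x_{jb} \nmid m$. Both of the above syzygies must then lift via the vanishing alternative of Lemma~\ref{pushdeformation}, giving $x_{v_{k-1}, a_{k-1}} m \in I_T$ and $x_{v_1, a_2} m \in I_T$. Since $m$ has degree $2$ and is not itself in $I_T$, there exist variables $x_{r_1 s_1}$ and $x_{r_2 s_2}$ dividing $m$ such that both $x_{v_{k-1}, a_{k-1}} x_{r_1 s_1}$ and $x_{v_1, a_2} x_{r_2 s_2}$ are generators of $I_T$. By Theorem~\ref{treetheorem}, $r_1$ lies in the component of $T\setminus e_{a_{k-1}}$ containing $v_{k-2}$ and $s_1$ is the first edge from $r_1$ on the path toward $v_{k-1}$; symmetrically for $r_2, s_2$ using $T\setminus e_{a_2}$. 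The target is to conclude that the unique path from $r_1$ to $r_2$ in $T$ has $e_{s_1}$ as its first edge and $e_{s_2}$ as its last, which would make $x_{r_1 s_1} x_{r_2 s_2} = m$ itself a generator of $I_T$, the sought contradiction.

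The main obstacle is the overlap regime (only possible for $k\ge 4$), where $r_1, r_2$ may both lie in the intersection of the two subtrees. I expect to dispatch these subcases by invoking additional syzygies of $f$ with the intermediate generators $x_{ia} x_{v_p, a_p}$ and $x_{v_p, a_{p+1}} x_{jb}$ for $1 < p < k-1$; each such generator yields a further vanishing condition of the form $x_{v_p, a_p} m \in I_T$ or $x_{v_p, a_{p+1}} m \in I_T$. Combining the positional constraints these impose on $s_1$ and $s_2$ should either force a single $s_i$ to equal two distinct edges of the $i$--$j$ path (an impossibility) or force $r_i \in \{i, j\}$ with the edge making $x_{r_i s_i}$ equal to $x_{ia}$ or $x_{jb}$, contradicting our starting assumption.
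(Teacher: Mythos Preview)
Your ``at most one'' half is correct and matches the paper's argument exactly.

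The ``at least one'' half has a genuine gap: the overlap regime for $k\ge 4$ is not proved, only sketched with ``I expect'' and ``should either force''. More importantly, the difficulty you run into is self-inflicted. You push toward the vertex $f_a$ along the path, one edge at a time, so the vanishing condition you extract at the first step is $x_{v_{k-1},a_{k-1}}\,m\in I_T$. This places the witness $r_1$ only on the $v_{k-2}$-side of $e_{a_{k-1}}$, a subtree that may well overlap the analogous region for $r_2$, and then you are stuck with a case analysis you do not finish. Concretely, for $k=4$ the monomial $m=x_{v_2,a_2}x_{v_2,a_3}$ satisfies both of your vanishing conditions yet is not a generator of $I_T$, so your two syzygies alone cannot yield the contradiction.

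The paper avoids all of this by observing that $f$ and $f_a$ share the variable $x_{ia}$, so there is a \emph{single} linear syzygy $x_{i'a}\cdot f = x_{jb}\cdot f_a$ directly between them (here $i'=v_1$). The vanishing alternative for this syzygy gives $x_{i'a}\,m\in I_T$, which forces the witness variable $x_{vk}\mid m$ to have $v$ in the component of $T\setminus e_a$ containing $i$. Symmetrically, the direct syzygy $x_{j'b}\cdot f = x_{ia}\cdot f_b$ forces a witness $x_{wl}\mid m$ with $w$ in the component of $T\setminus e_b$ containing $j$. These two subtrees are \emph{disjoint} (they lie on opposite ends of the $i$--$j$ path), so the unique $v$--$w$ path passes through $i$ and $j$, begins with $e_k$ and ends with $e_l$, and $x_{vk}x_{wl}$ is a generator of $I_T$ dividing $m$: contradiction. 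No overlap case ever arises. Replacing your first-step syzygies with these direct ones finishes your proof in one line.
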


\begin{proof}
Suppose that $f = x_{ia}x_{jb}$ is a generator of $I_T$, and suppose that $f+tm$ is a deformation of the generator. We first show that $m$ can not vanish when pushed towards every vertex of $I_T$. It is clear that it vanishes when pushed towards any vertex except $e_a$ and $e_b$. So it is enough to assume that it vanishes when pushed towards $e_a$ and $e_b$. Assume that $e_a=(i,i')$ and $e_b = (j',j)$. If we assume that $m$ vanishes when pushed towards the vertex $x_{ia}x_{i'a}$, we must have that $m\cdot x_{i'a}$ is divisible by a generator $f'$ of $I_T$. Since $m$ is assumed to not be divisible by a generator, we must have that $f'$ is divisible by $x_{i'a}$. Since the generators of $I_T$ correspond to paths, we see that $f'$ must correspond to a path starting in $i'$ moving away from $j$. So $m$ is divisible by a variable $x_{vk}$, where $e_k$ is the first edge in the path from $v$ to $j$ via $i$. Similarly, by pushing the deformation towards the other vertex, we get that $m$ is divisible by a variable $x_{wl}$, where $e_l$ is the last edge in the path from $i$ to $w$ via $j$. But that means that $m$ is divisible by $x_{vk}x_{wl}$. However, this is a generator of $I_T$ since the path from $v$ to $w$ starts in $e_k$ and ends in $e_l$. So we get a contradiction.

It remains to show that $m$ can not be pushed to both vertices corresponding to $e_a$ and $e_b$. But this is straight forward to verify, since $m$ must be divisible by $x_{jb}$ if it can be pushed to $x_{ia}x_{i'a}$ and it must be divisible by $x_{ia}$ if it can be pushed to $x_{j'b}x_{jb}$. Hence it must be divisible by $f$ itself, which is a contradiction.
\end{proof}

\begin{Def}
Let $T$ be a spanning tree of $K_{n'}$. We form the graph $G_T$ with vertex set consisting of edges of $T$, and the edges in $G_T$ corresponds to adjacent edges in $T$. Let $\nu_i^T = \#\{v\in G_T\,|\,\mathrm{deg}(v) = i\}$. We define the index of $T$ to be $i(T) = (n'-2)\nu_1^T + \nu_2^T$.
\end{Def}

\begin{Example}
If $T$ is the spanning tree in Example \ref{trepolarisering} above, then $G_T$ is the graph:
\begin{center}
\begin{picture}(100,60)
\put(10,12){\circle*{2}}
\put(7,3){$e_1$}
\put(10,12){\line(1,0){40}}
\put(50,12){\circle*{2}}
\put(47,3){$e_2$}
\put(50,12){\line(0,1){40}}
\put(50,52){\circle*{2}}
\put(50,52){\line(1,-1){40}}
\put(36,50){$e_3$}
\put(90,12){\circle*{2}}
\put(50,12){\line(1,0){40}}
\put(87,3){$e_4$}
\end{picture}
\end{center}
We can calculate that $i(T) = (5-2)\cdot 1 + 2 = 5$.
\end{Example}

\begin{Theorem}
Let $T$ be a spanning tree of $K_{n'}$ and suppose that $I_T$ is the corresponding polarization of $m_\textrm{sq.fr.}^2$. Then the dimension of the tangent space of $I_T$ in the Hilbert scheme of $m_\textrm{sq.fr.}^2$ is
$$(3n'-4)(n'-1)+i(T).$$
Furthermore, these are all smooth points in the same Hilbert scheme.
\end{Theorem}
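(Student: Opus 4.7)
The plan is to reduce the tangent-space dimension calculation to a count at each vertex by applying the preceding lemma, which says every first-order deformation of $I_T$ pushes to exactly one vertex $f_t = x_{it}x_{jt}$ (where $e_t = (i,j)$). Thus the dimension equals the total number of independent proper monomial deformations $f_t + tm$ summed over $t = 1, \ldots, n'-1$. At each vertex, I classify the valid degree-$2$ monomials $m$ into Type A, in which $m$ is divisible by $x_{it}$ or $x_{jt}$, and Type B, in which it is divisible by neither.

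For Type A, write $m = x_{it}v$ or $m = x_{jt}v$ with $v$ a variable. Let $T_i, T_j$ denote the two subtrees of $T$ obtained by removing $e_t$, and $N = 2(n'-1)$ the number of variables. The generators of $I_T$ divisible by $x_{it}$ correspond bijectively to the vertices of $T_j$ --- each $w \in T_j$ yields one generator $x_{it}x_{wb}$ with $e_b$ the last edge of the path from $i$ to $w$ --- and similarly for $x_{jt}$. Hence at each vertex the Type A count is $(N-|T_j|) + (N-|T_i|) = 2N - n' = 3n' - 4$, which sums to $(3n'-4)(n'-1)$ over the $n'-1$ edges.

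For Type B, I apply Lemma \ref{pushdeformation} to each relation of $I_T$ involving $f_t$, which has the form $x_{wb}f_t - x_{jt}(x_{it}x_{wb}) = 0$ for paths from $i$ through $e_t$ to $w$ (or the symmetric family through $j$). Since $x_{it}, x_{jt} \nmid m$, pushing forces $x_{wb}m \in I_T$ for every relevant $w, b$. Translating through the combinatorics of $T$, this forces $m$ to be divisible by $x_{ja}$ for each edge $e_a \neq e_t$ incident to $j$, and by $x_{ib}$ for each edge $e_b \neq e_t$ incident to $i$. Since $\deg m = 2$, this is feasible only when $\deg_{G_T}(e_t) = \deg_T(i) + \deg_T(j) - 2 \leq 2$. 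A direct enumeration then shows: if $\deg_{G_T}(e_t) = 1$ there are exactly $n'-2$ valid $m$'s; if $\deg_{G_T}(e_t) = 2$ there is exactly one; otherwise none. Summing yields $(n'-2)\nu_1^T + \nu_2^T = i(T)$, so the tangent-space dimension is $(3n'-4)(n'-1) + i(T)$.

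For smoothness, Type A deformations lift globally as variable substitutions $x_{it} \mapsto x_{it} + tv$ (or analogously for $x_{jt}$). For each Type B deformation with monomial $m$ at $f_t$, form the ideal $I_T'$ generated by $f_t + tm$ and the other unchanged generators of $I_T$; since $x_{wb}m \in I_T$ for every variable $x_{wb}$ appearing in a relation with $f_t$, every such relation lifts via an explicit $t$-multiple of an unchanged generator, proving $I_T'$ is flat over $k[t]$. Different Type B deformations at different vertices have disjoint $h_t$-supports, so they combine with any variable substitution to yield a global family of the predicted dimension. The main obstacle will be the Type B enumeration in the previous paragraph: verifying that the push conditions are simultaneously satisfiable in exactly $n'-2$, $1$, or $0$ ways depending on $\deg_{G_T}(e_t)$, and identifying the specific monomials by leveraging the tree structure in the same spirit as the case analyses carried out in Sections \ref{theboxpolarization} and \ref{standardpolarization}.
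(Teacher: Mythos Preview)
Your proposal is correct and follows essentially the same route as the paper's own proof. Both arguments reduce the tangent-space computation to a vertex-by-vertex count via the pushing lemma, split the monomial deformations at each vertex into those divisible by $x_{it}$ or $x_{jt}$ (your Type~A, the paper's ``deformations of the variables'') versus those divisible by neither (your Type~B), obtain $3n'-4$ for Type~A from the subtree partition $|T_i|+|T_j|=n'$, and count Type~B according to $\deg_{G_T}(e_t)$ to recover $i(T)$; smoothness is then argued by observing that each vertex deformation lifts because it already satisfies every linear relation, and that deformations at different vertices do not interact.

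One small caution: your intermediate claim that the push conditions force $m$ to be divisible by $x_{ja}$ for \emph{each} edge $e_a\neq e_t$ at $j$ (and symmetrically at $i$) is not what a single relation gives directly---from the neighbor $j'$ along $e_a$ one only gets that $m$ is divisible by \emph{some} variable indexing a path from $j'$ through $j$, which could a priori be $x_{j''a'}$ for another neighbor $j''$. The paper phrases this instead as ``$m$ must be divisible by a variable for each connected component of $T\setminus\{i,j\}$,'' which is equally informal. In both cases the enumeration goes through once one also imposes $m\notin I_T$: the alternative choices all produce generators of $I_T$, leaving precisely the $n'-2$, $1$, or $0$ monomials you claim. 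Since you flag exactly this enumeration as the point requiring care, your plan is sound.
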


\begin{proof}
We first show that if $g=x_{ik}x_{jk}$ is a generator of $I_T$ corresponding to the edge $e_k=(i,j)$, then the number of first order deformations of $g$ corresponding to deformations of the variables is $(3n'-4)$. The edge $e_k$ disconnects $T$ into two parts. Let $L$ be the component containing the vertex $i$ and let $R$ be the component containing the vertex $j$. We calculate that the dimension of deformations of the variable $x_{jk}$ is the total number of variables minus the number of vertices in $R$, and that the dimension of deformations of the variable $x_{ik}$ is the total number of variables minus the number of vertices in $L$. So the number of first order deformations of $g$, which corresponds to deformation of the variables is:
$$2(n'-1)-|R|+2(n'-1)-|L| = 4(n'-1)-n'=3n'-4.$$
Next, we need to calculate the dimension of deformations that does not occur as a deformation of the variables. We assume that $g = x_{ik}x_{jk}+mt$ is a first order deformation of a generator corresponding to the edge $e_k=(i,j)$. We observe that every other vertex of $T$ gives rise to a linear relation of $I_T$. This is of course because if $v$ is another vertex, then one of the paths $i$ to $v$ or $j$ to $v$ has to either start or end in $e_k$. Since $m$ is assumed to be a deformation not coming from deformation of the variables, it must vanish on these relations. We get the criteria that $m$ has to be divisible by a variable for each of the connected components in the graph $T\setminus\{i,j\}$. If $e_k$ has only one adjacent edge, i.e. $e_{l} = (j,j')$, it means that the only criterion for $m$ is that it is divisible by $x_{jl}$. The number of possible monomials of degree two, satisfying this, which are not a deformation of the variables, is $2(n'-1)-2 = n'-2$. If $e_k$ has two adjacent edges, i.e. $e_{l} = (j,j')$ and $e_{l'}=(i,i')$, then the criteria for $m$ is that it must be divisible by $x_{jl}$ and $x_{il'}$. There is only one monomial of degree $2$ satisfying this. Hence, the dimension of the first order deformations of $I_T$, which are not deformations of the variables are calculated by the function $i(T)$.

Since there are no linear relations between two vertices of $I_T$, and since every first order deformation of a vertex must vanish in all linear relations, it follows that all the first order deformations lift to global deformations of $I_T$. We therefore have that every such polarization is a smooth point in the Hilbert scheme.

\end{proof}

\begin{Remark}
In \cite{lohne2013polarizations}, we show that the box polarization corresponds to the line graph, while the standard polarization corresponds to the star graph. One might therefore assume that these two polarizations would give the highest and lowest dimensional tangent space in the Hilbert scheme. Indeed, the standard polarization will have the lowest dimensional tangent space, this is because $i(T)=0$ for $n'\ge 5$ since $G_T$ is the graph $K_{n'-1}$. When $n'<5$, then the only maximal polarizations are the box polarization and the standard polarization, and the calculations of the tangens spaces show that the standard polarization has the lowest dimension.

However, the box polarization is surprizingly not the polarization with the highest dimensional tangent space. For instance, if $n'=7$ and $T$ is the graph   

\begin{center}
\begin{picture}(80,60)
\put(40,12){\circle*{3}}
\put(0,0){$1$}
\put(20,0){$2$}
\put(40,0){$3$}
\put(60,0){$4$}
\put(80,0){$5$}
\put(45,25){$6$}
\put(45,45){$7$}
\put(0,12.25){\line(1,0){80}}
\put(40.25,12){\line(0,1){40}}
\put(0,12){\circle*{3}}
\put(20,12){\circle*{3}}

\put(60,12){\circle*{3}}
\put(80,12){\circle*{3}}


\put(40,32){\circle*{3}}
\put(40,52){\circle*{3}}


\end{picture}
\end{center}
then $i(T) = 3\cdot 5 = 15$, while $i(L) = 2\cdot 5 + 4\cdot 1 = 14$ for the line graph $L$ corresponding to the box polarization.
\end{Remark}

\bibliography{hlohne}

\end{document}